\newcommand\un{\mathds{1}}
\newcommand\eps{\varepsilon}
\renewcommand\phi{\varphi}
\newcommand\esp[1]{\mathbb{E}\left[#1\right]}
\newcommand\espcc[2]{\mathbb{E}_{#1}\left[#2\right]}
\newcommand\espccc[3]{\mathbb{E}_{#1}\left[#3\right]}
\newcommand\uno[1]{\un_{\left\{#1\right\}}}
\newcommand\mesinv{\lambda}
\newcounter{comptetape}
\newtheorem{thm}{Theorem}[section]
\newtheorem{prop}[thm]{Proposition}
\newtheorem{assu}{Assumption}
\newtheorem{lem}[thm]{Lemma}
\newtheorem{rem}[thm]{Remark}
\newtheorem{defi}[thm]{Definition}
\newtheorem{example}[thm]{Example}
\newcommand\deriv[2]{\frac{\partial\ifthenelse{\equal{#1}1}{}{^{#1}} \bar{X}_{#2}^{(x)}}{\partial x\ifthenelse{\equal{#1}1}{}{^{#1}}}}
\newcommand\derivf[1]{\left(\sqrt{f}\right)^{\ifthenelse{\equal{#1}{1}}{'}{(#1)}}}
\newcommand\D{\mathcal {D}}
\newcommand\LL{\mathbb{L}}
\newcommand\n{\mathbb{N}}
\renewcommand\r{\mathbb{R}}
\newcommand\R{\r}
\newcommand\E{\mathbb{E}}
\newcommand\constantetruc{C}
\renewcommand\ll{\left}
\newcommand\rr{\right}
\begin{document}

\begin{frontmatter}

\title{Mean field limits for interacting Hawkes processes in a diffusive regime}

\runtitle{Hawkes with random jumps}

\begin{aug}
 % indicate corresponding author with \corref{}

\author{\fnms{Xavier} \snm{Erny}\thanksref{a,e1}\corref{}\ead[label=e1,mark]{xavier.erny@univ-evry.fr}},
\author{\fnms{Eva}
\snm{L\"ocherbach}\thanksref{b,e2}\ead[label=e2,mark]{eva.locherbach@univ-paris1.fr}}
\and \author{\fnms{Dasha} \snm{Loukianova}\thanksref{a,e3} \ead[label=e3,mark]{dasha.loukianova@univ-evry.fr}}

\address[a]{Universit\'e Paris-Saclay, CNRS, Univ Evry, Laboratoire de Math\'ematiques et Mod\'elisation d'Evry, 91037, Evry, France\\\printead{e1,e3}}
\address[b]{Statistique, Analyse et Mod\'elisation Multidisciplinaire, Universit\'e Paris 1 Panth\'eon-Sorbonne, EA 4543 et FR FP2M 2036 CNRS\\\printead{e2}}

\runauthor{X. Erny et al.}

 \end{aug}

\begin{abstract}
We consider a sequence of systems of Hawkes processes having mean field interactions in a diffusive regime. The stochastic intensity of each process is a solution of a stochastic differential equation driven by $N$ independent Poisson random measures. We show that, as the number of interacting components $N$ tends to infinity, this intensity converges in distribution in the Skorokhod space to a CIR-type diffusion. Moreover, we prove the convergence in distribution of the Hawkes processes to the limit point process having the limit diffusion as intensity. To prove the convergence results, we use analytical technics based on the convergence of the associated infinitesimal generators and Markovian semigroups.
\end{abstract}

 \begin{keyword}[class=MSC]
  \kwd{60K35}
   \kwd{60G55}
   \kwd{60J35}
%   \kwd[; secondary ]{60J25}
 \end{keyword}
% 62M05 Markov processes: estimation
% 62F12 Asymptotic properties of estimators
% 60J25 Markov processes with continuous parameter
% 60J27 Markov chains with continuous parameter
% 60J35 Transition functions, generators and resolvents

 \begin{keyword}
 \kwd{Multivariate nonlinear Hawkes processes}
 \kwd{Mean field interaction}
 \kwd{Piecewise deterministic Markov processes}
% \kwd{Local asymptotic normality}
 %  \kwd{Asymptotic normality}
   %\kwd{Confidence intervals}
%\kwd{Cram�r-Rao efficiency}
 %  \kwd{Maximum likelihood estimation}
 %  \kwd{Random walk in random environment}
%   \kwd{Confidence intervals}
 \end{keyword}

\end{frontmatter}

\renewcommand\cite[1]{\citep{#1}}

\section*{Introduction}

Hawkes processes were originally introduced by \cite{hawkes} to model the appearance of earthquakes in Japan. Since then these processes have been successfully used in many fields to model various physical, biological or economical phenomena exhibiting self-excitation or -inhibition and interactions, such as seismology (\cite{helmstetter}, \cite{kagan}, \cite{ogata}, \cite{bacrymuzy}),  financial contagion (\cite{aitsahalia}), high frequency financial order books arrivals (\cite{abergel}, \cite{bauwens}, \cite{hewlett}), genome analysis (\cite{reynaudbouret}) and interactions in social networks (\cite{zhou}). In particular, multivariate Hawkes processes are extensively used in neuroscience to model temporal arrival of spikes in neural networks (\cite{grun}, \cite{neuro}, \cite{pillow}, \cite{reynaudbouret2}) since they provide good models to describe the typical temporal decorrelations present in spike trains of the neurons as well as the functional connectivity in neural nets.

In this paper, we consider a sequence of multivariate Hawkes processes $(Z^N)_{N\in\n^*}$ of the form $Z^N=(Z^{N,1}_t,\ldots Z^{N,N}_t)_{t\geq 0}$. Each $Z^N$ is designed to describe the behaviour of some interacting system with $N$ components,  for example a neural network of $N$ neurons. More precisely, $ Z^N$ is a multivariate counting process where each $Z^{N,i}$ records the number of events related to the $i-$th component, as for example the number of spikes of the $i-$th neuron. These counting processes are interacting, that is, any event of type $i$ is able to trigger or to inhibit future events of all other types $j$. The process $(Z^{N,1},\hdots,Z^{N,N})$ is informally defined via its stochastic intensity process $\lambda^{N}=(\lambda^{N,1}(t),\hdots,\lambda^{N,N}(t))_{t\geq 0}$ through the relation

$$ \mathbb{P} ( Z^{N,i}\mbox{ has a jump in  } ]t, t + d t] | {\cal F}_t ) = \lambda^{N,i} ( t) dt , ~~1\leq i\leq N,$$
where $\mathcal{F}_t=\sigma\left(Z^N_s~:~0\leq s\leq t\right).$ The stochastic intensity of a Hawkes process is given by
\begin{equation}
\label{intensitehawkes}
 \lambda^{N,i} ( t ) = f^N_i \left(\sum_{ j = 1 }^N \int_{ -\infty  }^t h_{ij}^N(t-s)  d Z^{N,j} ( s) \right) .
\end{equation} 
Here, $h_{ij}^N$ models the action or the influence of events of type $j$ on those of type $i$, and how this influence decreases as time goes by. The function $f^N_i$ is called the jump rate function of $Z^{N,i}$.

Since the founding works of~\cite{hawkes} and~\cite{hawkesoakes}, many probabilistic properties of Hawkes processes have been well-understood, such as ergodicity, stationarity and long time behaviour (see \cite{bremaud}, \cite{daley}, \cite{costa}, \cite{raad_renewal_2019} and \cite{graham_regenerative_2019}). A number of authors studied the statistical inference for Hawkes processes (\cite{ogata2} and~\cite{reynaudbouret}). Another field of study, very active nowadays, concerns the behaviour of the Hawkes process when the number of components $N$ goes to infinity. During the last decade, large population limits of systems of interacting Hawkes processes have been studied in \cite{nicolaseva},~\cite{highdimensional} and \cite{evasusanne}.

In \cite{highdimensional}, the authors consider a general class of Hawkes processes whose interactions are given by a graph. In the case where the interactions are of mean field type and scaled in $N^{-1}$, namely $h_{ij}^N=N^{-1}h$ and $f^N_i=f$ in~\eqref{intensitehawkes}, they show that the Hawkes processes can be approximated by an i.i.d. family of inhomogeneous Poisson processes. They observe that for each fixed integer $k$, the joint law of $k$ components converges to a product law as $N$ tends to infinity, which is commonly referred to as the propagation of chaos. \cite{evasusanne} generalize this result to a multi-population frame and show how oscillations emerge in the large population limit. Note again that the interactions in both papers are scaled in $N^{-1}$, which leads to limit point processes with deterministic intensity.

The purpose of this paper is to study the large population limit (when $N$ goes to infinity) of the multivariate Hawkes processes $(Z^{N,1},\hdots,Z^{N,N})$ with mean field interactions scaled in $N^{-1/2}$. Contrarily to the situation considered in~\cite{highdimensional} and~\cite{evasusanne}, this scaling leads to a non-chaotic limiting process with stochastic intensity. As we consider interactions scaled in $N^{-1/2}$, we have to center the terms of the sum in~\eqref{intensitehawkes} to make the intensity process converge according to some kind of central limit theorem. To this end, we consider intensities with stochastic jump heights. Namely, in this model, the multivariate Hawkes processes $(Z^{N,i})_{1\leq i\leq N}$ ($N\in\n^*$) are of the form
\begin{equation}
\label{ZNit}
Z^{N,i}_t=\int_{]0,t]\times\r_+\times\r}\uno{z\leq\lambda^N_s}d\pi_i(s,z,u),~~1 \le i \le N, 
\end{equation}
where $(\pi_i)_{i\in\n^*}$ are i.i.d. Poisson random measures on $\r_+\times\r_+\times\r$ of intensity $ds\, dz\, d\mu(u)$ and $\mu$ is a centered probability measure on~$\r$ having a finite second moment $\sigma^2$. The stochastic intensity of $Z^{N,i}$ is given by
$$\lambda^{N,i}_t=\lambda^N_t=f\left(X^N_{t-}\right),$$
where
$$X_t^N=\frac{1}{\sqrt{N}}\sum_{j=1}^N \int_{[0,t]\times\r_+\times\r}h(t-s)u\uno{z\leq f\left(X^N_{s-}\right)}d\pi_j(s,z,u).$$

Moreover we consider a function $h$ of the form $h(t)=e^{-\alpha t}$ so that the process $(X_t^N)_t$ is a piecewise deterministic Markov process. In the framework of neurosciences, $X_t^N$ represents the membrane potential of the neurons at time $t . $ The random jump heights $u ,$ chosen according to the measure $ \mu,$ model random synaptic weights and the jumps of $Z^{N,j}$ represent the spike times of neuron $j$. If neuron $j$ spikes at time $t$, an additional random potential height $u /\sqrt{N}$ is given to all other neurons in the system. As a consequence, the process $X^N$ has the following dynamic
\begin{equation*}
\begin{array}{l}dX_t^N=-\alpha X_t^Ndt+\frac{1}{\sqrt{N}}\displaystyle\sum_{j=1}^N\int_{\r_+\times\r}u\uno{z\leq f\left(X_{t-}^N\right)}d\pi_j(t,z,u).
\end{array}
\end{equation*}
Its infinitesimal generator is given by
$$ A^N g (x) = - \alpha x\,  g' (x) + N f(x)   \int_\r \left[ g \left( x + \frac{u}{\sqrt{N}} \right) - g(x) \right] \mu ( du ),$$
for sufficiently smooth functions $g$. As $N$ goes to infinity, the above expression converges to
$$ \bar{A} g( x) = - \alpha x\,  g' (x) + \frac{\sigma^2}{2} f(x) g'' ( x) , $$
which is the generator of a CIR-type diffusion given as solution of the SDE
\begin{equation}
\label{sde1}
d \bar{X}_t = - \alpha \bar{X}_t dt + \sigma\sqrt{ f ( \bar{X}_t)}  d B_t.
\end{equation}
It is classical to show in this framework that the convergence of generators implies the convergence of $X^N$ to $\bar{X}$ in distribution in the Skorokhod space. In this article we establish explicit bounds for the weak error for this convergence by means of a Trotter-Kato like formula. Moreover we establish for each $i,$ the convergence in distribution in the Skorokhod space of the associated counting process $Z^{N,i}$ to the limit counting process $\bar{Z}^i$ which has intensity $(f(\bar{X}_t))_t.$ Conditionally on $\bar{X}$, the $\bar{Z}^i, i \geq 1,$ are independent. This property can be viewed as a conditional propagation of chaos-property, which has to be compared to \cite{highdimensional} and \cite{evasusanne} where the intensity of the limit process is deterministic and its components are truly independent, and to \cite{delarue}, \cite{dawson_stochastic_1995} and \cite{kurtz_particle_1999} where all interacting components are subject to common noise. In our case, the common noise, that is, the Brownian motion $ B$ of \eqref{sde1}, emerges in the limit as a consequence of the central limit theorem. 

To obtain a precise control of the speed of convergence of $X^N$ to $\bar{X}$ we use analytical methods showing first the convergence of the generators from which we deduce the convergence of the semigroups via the formula 
\begin{equation}
\label{formulegenerateur}
\bar{P}_tg(x)-P^N_tg(x)=\int_0^t P_{t-s}^N\left(\bar{A}-A^N\right)\bar{P}_sg(x)ds.
\end{equation}
Here $\bar{P}_tg(x)=\espcc{x}{g(\bar{X}_t)}$ and $P_t^Ng(x)=\espccc{x}{N}{g(X_t^N)}$ denote the Markovian semigroups of $\bar{X}$ and $X^N$. This formula is well-known in the classical semigroup theory setting where the generators are strong derivatives of semigroups in the Banach space of continuous bounded functions (see Lemma~1.6.2 of~\cite{ethier}). In our case, we have to consider extended generators (see~\cite{davis} or~\cite{meyn}), i.e. $A^Ng(x)$ is the point-wise derivative of $t\mapsto P_t^Ng(x)$ in $0.$ {The proof of formula~\eqref{formulegenerateur} for our extended generators is given in the Appendix (Proposition~\ref{generatorsemigroup}).}

It is well-known that under suitable assumptions on $f,$ the solution of~\eqref{sde1} admits a unique invariant measure $\mesinv$ whose density is explicitly known. Thus, a natural question is to consider the limit of the law ${\mathcal L}(X_t^N)$ of $X^N_t$ when $t$ and $N$ go simultaneously to infinity. We prove that the limit of ${\mathcal L}(X_t^N)$ is $\lambda$, for $(N, t ) \to (\infty,\infty) ,$ under suitable conditions on the joint convergence of $(N,t)$. We also prove that there exists a parameter $ \alpha^*$ such that for all  $ \alpha > \alpha^*,$ this converges holds whenever $ (N, t ) \to (\infty, \infty ) $ jointly, without any further condition, and we provide a control of the error (Theorem~\ref{convergencesemigroupe2}).

The paper is organized as follows: in Section~\ref{notationassumptions}, we state the assumptions and formulate the main results. Section~\ref{convergencesemigroups} is devoted to the proof of the convergence of the semigroup of $X^N$ to that of $\bar{X}$ (Theorem~\ref{convergencesemigroupe1}$.(i)$), and Section~\ref{convergencetransition} to the study of the limit of the law of $X^N_t$ as $N, t \to \infty$ (Theorem~\ref{convergencesemigroupe2}). In Section~\ref{convergencepoint}, we prove the convergence of the systems of point processes $(Z^{N,i})_{1\leq i\leq N}$ to $(\bar{Z}^i)_{i\geq 1}$ (Theorem~\ref{convergenceZNZ}). Finally in the Appendix, we {collect some results about extended generators and we give the proof of \eqref{formulegenerateur} together with some other technical results that we use throughout the paper.}

\section{Notation, assumptions and main results}\label{notationassumptions}
\subsection{Notation}\label{notation}
The following notation are used throughout the paper:
\begin{itemize}
\item If $X$ is a random variable, we note $\mathcal{L}(X)$ its distribution.
\item If $g$ is a real-valued function which is $n$ times differentiable, we note $||g||_{n,\infty}=\sum_{k=0}^n||g^{(k)}||_\infty.$
\item If $g : \r \to \r $ is a real-valued measurable function and $ \lambda $ a measure on $ (\r, {\cal B}(\r)) $ such that $ g $ is integrable with respect to $ \lambda, $ we write $ \lambda (g) $ for $ \int_\r g d \lambda.$ 
\item We write $C_b^n(\r)$ for the set of the functions $g$ which are $n$ times continuously differentiable such that $||g||_{n,\infty}<~+\infty$, and we write for short $C_b(\r)$ instead of $C_b^0(\r).$ Finally,  $C^n(\r)$ denotes the set of $n$ times continuously differentiable functions that are not necessarily bounded nor have bounded derivates.
\item If $g$ is a real-valued function and $I$ is an interval, we note $||g||_{\infty,I}={\sup}_{x\in I}|g(x)|.$
\item We write $C^n_c(\r)$ for the set of functions that are $n$ times continuously differentiable and that have a compact support.
\item We write $D(\r_+,\r)$ for the Skorokhod space of c\`adl\`ag functions from $\r_+$ to $\r,$ endowed with the Skorokhod metric (see Chapter~3 Section~16 of~\cite{billingsley}), and $D(\r_+,\r_+)$ for this space restricted to non-negative functions.
%\item $\mathcal{M}^\#$ denotes the space of locally finite measures on $\r_+\times\r_+$ endowed with the topology of the weak convergence, and $\mathcal{N}^\#$ the subspace that contains only the simple point measures.
\item $\alpha$ is a positive constant, $L,\sigma$ and $m_k$ ($1\leq k\leq 4$)  are fixed parameters defined in Assumptions~\ref{hyp1f},~\ref{hyp1control} and~\ref{hyp2} below. Finally, we note $\constantetruc$ any arbitrary constant, so the value of $\constantetruc$ can change from line to line in an equation. Moreover, if $\constantetruc$ depends on some non-fixed parameter $\theta$, we write $\constantetruc_\theta$.
\end{itemize}

\subsection{Assumptions}\label{assumptions}

Let $X^N$ satisfy
\begin{equation}
\label{XtNdefinition}
\left\{\begin{array}{l}dX_t^N=-\alpha X_t^Ndt+\frac{1}{\sqrt{N}}\displaystyle\sum_{j=1}^N\int_{\r_+\times\r}u\uno{z\leq f\left(X_{t-}^N\right)}d\pi_j(t,z,u),\\
X_0^N\sim\nu_0^N,\end{array}\right.
\end{equation}
where $\nu_0^N$ is a probability measure on $\r$. Under natural assumptions on $f,$ the SDE~\eqref{XtNdefinition} admits a unique non-exploding strong solution (see Proposition~\ref{wellposed}).

The aim of this paper is to provide explicit bounds for the convergence of $X^N$ in the Skorokhod space to the  limit process $(\bar{X}_t)_{t\in\r_+}$ which is solution to the SDE
\begin{equation}
\label{sde}
\left\{\begin{array}{l}d\bar{X}_t=-\alpha \bar{X}_tdt+\sigma\sqrt{f\left(\bar{X}_t\right)}dB_t,\\
\bar{X}_0\sim\bar{\nu}_0,\end{array}\right.
\end{equation}
where $\sigma^2$ is the variance of~$\mu$, $(B_t)_{t\in\r_+}$ is a one-dimensional standard Brownian motion, and $\bar{\nu}_0$ is a suitable probability measure on~$\r$.

To prove our results, we need to introduce the following assumptions.

\begin{assu}
\label{hyp1f}
$\sqrt{f}$ is a positive and Lipschitz continuous function, having Lipschitz constant $L$.
\end{assu}

Under Assumption~\ref{hyp1f}, it is classical that the SDE~\eqref{sde} admits a  unique non-exploding strong solution (see remark~IV.2.1, Theorems~IV.2.3, IV.2.4 and~IV.3.1 of~\cite{ikeda}).

Assumption~\ref{hyp1f} is used in many computations of the paper in one of the following forms:

$\bullet $  $\forall x\in\r,f(x)\leq (\sqrt{f(0)}+L|x|)^2,$\\
or, if we do not need the accurate dependency on the parameter,

$\bullet $  $\forall x\in\r,f(x)\leq C(1+x^2).$

\begin{assu}
\label{hyp1control}
$ $
\begin{itemize}
\item $\int_\r x^4d\bar{\nu}_0(x)<\infty$ and for every $N\in\n^*,$ $\int_\r x^4d\nu^N_0(x)<\infty.$
\item $\mu$ is a centered probability measure having a finite fourth moment, we note $\sigma^2$ its variance.
\end{itemize}
\end{assu}

Assumption~\ref{hyp1control} allows us to control the moments up to order four of the processes $(X^N_t)_t$ and $(\bar{X}_t)_t$ (see Lemma~\ref{controlmoments}) and to prove the convergence of the generators of the processes $(X_t^N)_t$ (see Proposition~\ref{convergencegenerator}).

\begin{assu}
\label{hyp2}
We assume that $f$ belongs to $C^4(\mathbb{R})$ and that for each $1\leq k\leq 4, (\sqrt{f})^{(k)}$ is bounded by some constant $m_k$.
\end{assu}

\begin{rem}
By definition $m_1=L,$ since $m_1:=||(\sqrt{f})'||_\infty$ and $L$ is the Lipschitz constant of~$\sqrt{f}.$
\end{rem}

Assumption~\ref{hyp2} guarantees that the stochastic flow associated to~\eqref{sde} has regularity properties with respect to the initial condition $\bar{X}_0=x$. This will be the main tool to obtain uniform, in time, estimates of the limit semigroup, see Proposition~\ref{regularityPt}.

\begin{example}
The functions $f(x)=1+x^2, f(x)=\sqrt{1+x^2}$ and $f(x)=(\pi/2+\arctan x)^2$ satisfy Assumptions~\ref{hyp1f} and~\ref{hyp2}.
\end{example}

\begin{assu}
\label{hyp1w}
$X^N_0$ converges in distribution to $\bar{X}_0$.
\end{assu}

Obviously, Assumption~\ref{hyp1w} is a necessary condition for the convergence in distribution of $X^N$ to~$\bar{X}.$

\subsection{Main results}\label{main}

Our first main result is the convergence of the process $X^N$ to $\bar{X}$ in distribution in the Skorokhod space, with an explicit rate of convergence  for their semigroups. This rate of convergence will be expressed in terms of the following parameters 
\begin{equation}\label{eq:beta}
\beta : = \max\left(\frac12\sigma^2L^2-\alpha,2\sigma^2L^2-2\alpha,\frac72\sigma^2L^2-3\alpha\right)
\end{equation}
and, for any $ T > 0 $ and any fixed $ \varepsilon > 0, $   
\begin{equation}\label{eq:kt}
K_T :=  \left(1+1/\eps\right)\int_0^T(1+s^2)e^{\beta s}\left(1+e^{(\sigma^2L^2-2\alpha+\eps)(T-s)}\right)ds .
\end{equation}

\begin{rem}
%In the notation $K_T$, we do not emphasize the dependence in $\eps$. The role of $\eps$ is the following: if the condition $\alpha>\sigma^2L^2/2$ holds true, we can fix some $\eps>0$ such that $\sigma^2L^2-2\alpha+\eps<0$, and if no condition on $\alpha$ is required, we can fix any value of $\eps>0.$ In every case, $\eps$ is considered to be fixed.
If $\alpha>7/6\,\sigma^2L^2,$ then $\beta<0,$ and one can choose $\eps>0$ such that $\sigma^2L^2-2\alpha+\eps<0,$ implying that $\sup_{T>0}K_T<\infty.$
\end{rem}

Recall that $\bar{P}_tg(x)=\espcc{x}{g(\bar{X}_t)}$ and $P_t^Ng(x)=\espccc{x}{N}{g(X_t^N)}$ denote the Markovian semigroups of $\bar{X}$ and $X^N$.

\begin{thm}
\label{convergencesemigroupe1}
If Assumptions~\ref{hyp1f} and~\ref{hyp1control} hold, then the following assertions are true.
\begin{itemize}
\item[(i)] Under Assumption~\ref{hyp2}, for all $T\geq 0,$ for each $g\in C_b^3(\r)$ and $x\in\r,$
$$\underset{0\leq t\leq T}{\sup}~\left|P_t^Ng(x)-\bar{P}_tg(x)\right|\leq C (1+x^2)K_T||g||_{3,\infty}\frac{1}{\sqrt{N}}.$$
In particular, if $\alpha>\frac76\sigma^2L^2,$ then 
$$ \underset{ t \geq 0 }{\sup}~\left|P_t^Ng(x)-\bar{P}_tg(x)\right|\leq C (1+x^2) ||g||_{3,\infty}\frac{1}{\sqrt{N}}.$$
\item[(ii)] If in addition Assumption~\ref{hyp1w} holds, then $(X^N)_N$ converges in distribution to~$\bar{X}$ in $D(\r_+,\r)$.
\end{itemize}
\end{thm}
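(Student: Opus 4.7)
The natural starting point is the Trotter-Kato identity~\eqref{formulegenerateur}, which reduces the problem to controlling, for each fixed $s$, the quantity $(\bar{A}-A^N)\bar{P}_s g(x)$, and then integrating $P_{t-s}^N$ applied to this difference against $ds$. Writing
\[
A^N g(x) = -\alpha x\, g'(x) + N f(x)\int_{\R}\left[g\Bigl(x+\tfrac{u}{\sqrt N}\Bigr)-g(x)\right]\mu(du),
\]
a third-order Taylor expansion of $g(x+u/\sqrt N)-g(x)$, combined with the fact that $\mu$ is centered (killing the first-order term) and has variance $\sigma^2$ (producing $\tfrac{\sigma^2}{2}f(x)g''(x)$, exactly $\bar{A}g(x)+\alpha x g'(x)$), yields the cubic remainder
\[
(\bar{A}-A^N)g(x) = \frac{f(x)}{\sqrt N}\cdot R\bigl(g''',x,N\bigr),
\]
where $|R(g''',x,N)|\leq C\,\|g'''\|_\infty$ uniformly in $x$ and $N$, with $C$ controlled by the third absolute moment of $\mu$ (finite under Assumption~\ref{hyp1control}).

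Applying this bound to $\bar{P}_s g$ in place of $g$, the key analytic input is a uniform control of $\|(\bar{P}_s g)'''\|_\infty$. This is exactly what Proposition~\ref{regularityPt} is designed to provide: by differentiating the SDE~\eqref{sde} in its initial condition up to order three and using Assumption~\ref{hyp2} (boundedness of $(\sqrt f)^{(k)}$, $k\leq 3$), one obtains pointwise bounds of the form $|(\bar{P}_s g)^{(k)}(y)|\leq C(1+|y|^{n_k})\|g\|_{k,\infty}e^{\beta s}$ for appropriate polynomial weights; the worst exponent among $k=1,2,3$ is exactly $\beta$ defined in~\eqref{eq:beta}. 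Plugging this estimate back into \eqref{formulegenerateur} and using $f(y)\leq C(1+y^2)$ together with the moment bounds on $X^N_{t-s}$ from Lemma~\ref{controlmoments} (which grow at most like $e^{(\sigma^2L^2-2\alpha+\eps)(T-s)}$), we obtain
\[
\bigl|P^N_{t-s}\bigl[f(\cdot)\,(\bar{P}_s g)'''(\cdot)\bigr](x)\bigr|\leq C\,(1+x^2)(1+s^2)\,e^{\beta s}\bigl(1+e^{(\sigma^2L^2-2\alpha+\eps)(T-s)}\bigr)\|g\|_{3,\infty}.
\]
Integrating over $s\in[0,T]$ reproduces precisely the constant $K_T$ of~\eqref{eq:kt}, and dividing by $\sqrt N$ gives the claimed bound in $(i)$. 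The remark after the definition of $K_T$ shows that $\sup_T K_T<\infty$ when $\alpha>\tfrac{7}{6}\sigma^2 L^2$, yielding the uniform-in-time version.

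For assertion $(ii)$, the convergence of semigroups from $(i)$ applied to any $g\in C_b^3(\R)$, combined with Assumption~\ref{hyp1w} (which lets us integrate $x\mapsto P_t^N g(x)$ against $\mathcal{L}(X_0^N)$), yields convergence of one-dimensional distributions. The Markov property together with~$(i)$ extends this to finite-dimensional distributions. Tightness in $D(\R_+,\R)$ is obtained from the moment estimates of Lemma~\ref{controlmoments} and the Aldous criterion, using the semimartingale decomposition of $X^N$: the drift part $-\alpha X^N_t\,dt$ has bounded variation estimates via moments, and the jump martingale part has predictable quadratic variation bounded by $\int_0^t f(X^N_s)\sigma^2\,ds$. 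Identification of the limit as the unique solution of~\eqref{sde} follows from $(i)$.

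The principal technical obstacle is the combined growth control: the third derivative of $\bar{P}_s g$ can a priori blow up exponentially in $s$, and the moments of $X^N_{t-s}$ likewise grow exponentially in $t-s$, so the non-trivial task is to identify the sharp exponents making these two sources of growth compatible, which is what fixes the precise form of $\beta$ and of the exponent $\sigma^2L^2-2\alpha+\eps$ inside $K_T$, and in turn the threshold $\alpha>\tfrac{7}{6}\sigma^2L^2$ for global-in-time bounds.
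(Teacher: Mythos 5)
Your proof of part $(i)$ is essentially the paper's own argument: the Trotter--Kato formula \eqref{TBbis}, the third-order Taylor estimate of Proposition~\ref{convergencegenerator}, the derivative bound of Proposition~\ref{regularityPt} and the moment bound of Lemma~\ref{controlmoments}$(i)$ are combined exactly as you describe to produce $K_T$, and the uniform-in-time statement follows from the remark after \eqref{eq:kt}. (One small imprecision: Proposition~\ref{regularityPt} gives a bound on $\|(\bar P_s g)'''\|_\infty$ that is uniform in the space variable, not a polynomially weighted pointwise bound; the $(1+x^2)$ factor comes solely from the moments of $X^N_{t-s}$, which is in fact how you use it, so nothing breaks.) For part $(ii)$, however, you take a genuinely different route from the paper: you build convergence of one-dimensional and then finite-dimensional distributions out of $(i)$ via the Markov property, and obtain tightness from Aldous' criterion using the semimartingale decomposition and Lemma~\ref{controlmoments}. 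The paper instead applies Theorem~IX.4.21 of \cite{jacod} directly to the semimartingale $X^N$, checking convergence of its characteristics ($b'^N(x)=-\alpha x$, $c'^N(x)=\sigma^2 f(x)$, jump kernel $K^N(x,dy)=Nf(x)\mu(\sqrt N\,dy)$), which yields functional convergence in one stroke; your route is the one the paper only mentions as an alternative. The trade-off is real: the characteristics approach needs neither the quantitative bound $(i)$ nor the regularity Assumption~\ref{hyp2}, and it dispenses with a separate tightness proof, whereas your route recycles the estimate already proved but requires Assumption~\ref{hyp2}, a truncation or uniform-integrability step to integrate the $(1+x^2)$-weighted bound of $(i)$ against $\mathcal{L}(X_0^N)$ (Assumption~\ref{hyp1w} only gives weak convergence of the initial laws, not convergence of second moments), and the Aldous-criterion computation over stopping times---none of these are obstacles, but they would all have to be written out to make the argument complete.
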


We refer to Proposition~\ref{regularityPt} for the form of $\beta$ given in \eqref{eq:beta}. Theorem~\ref{convergencesemigroupe1} is proved in the end of Subsection~\ref{convergencesemigroups2}. (ii) is a consequence of Theorem~IX.4.21 of \cite{jacod}, using that $X^N$ is a semimartingale. Alternatively, it can be proved as a consequence of  (i), using that $X^N$ is a Markov process.

Below we give some simulations of the trajectories of the process $(X^N_t)_{t\geq 0}$ in Figure~\ref{simulation}.

\begin{figure}[!h]
\hspace*{-1cm}\includegraphics[trim = 1.9cm 17cm 0cm 4cm,clip]{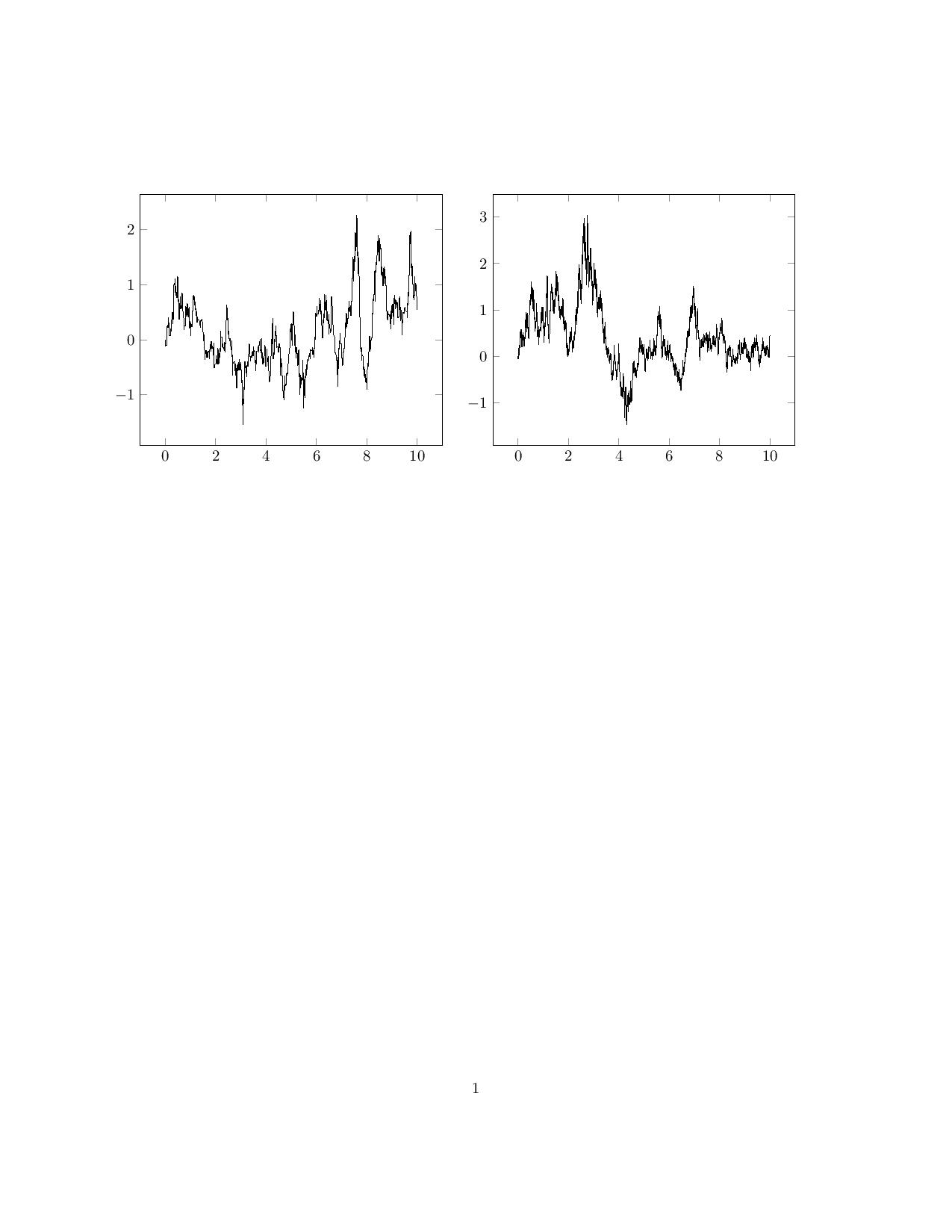}
\caption{Simulation of trajectories of $(X_t^N)_{0\leq t\leq 10}$ with $X^N_0=0$, $\alpha=1$, $\mu=\mathcal{N}(0,1)$, $f(x)=1+x^2,$ $N=100$ (left picture) and $N=500$ (right picture).}
\label{simulation}
\end{figure}

%\begin{figure}[!h]
%\hspace*{-0.4cm}\includegraphics[trim = 1.9cm 17cm 0cm 4cm,clip]{../../../graphique_tex/simulationXtN/graphiqueXtN1050.jpg}
%\caption{Simulation of trajectories of $(X_t^N)_{0\leq t\leq 10}$ with $X^N_0=0$, $\alpha=1$, $\mu=\mathcal{N}(0,1)$, $f(x)=1+x^2,$ $N=10$ (left picture) and $N=50$ (right picture).}
%\label{simulation}
%\end{figure}

\begin{rem}
Theorem~\ref{convergencesemigroupe1}$.(ii)$ states the convergence of $X^N$ to $\bar X$ in the Skorokhod topology. Since $\bar X$ is almost surely continuous, this implies the, a priori stronger, convergence in distribution in the topology of the uniform convergence on compact sets. Indeed, according to Skorohod's representation theorem (see Theorem~6.7 of \cite{billingsley}), we can assume that $X^N$ converges almost surely to $\bar X$ in the Skorokhod space, and this classically entails the uniform convergence on every compact set (see the discussion at the bottom of page~124 in Section~12 of~\cite{billingsley}).
\end{rem}

Under our assumptions, $\bar{P}$ admits an invariant probability measure $\mesinv$, and we can even control the speed of convergence of $P_t^{N}g(x)$ to $\lambda (g)$, as $(N,t)$ goes to infinity, for suitable conditions on the joint convergence of $N$ and $t$.

\begin{thm}
\label{convergencesemigroupe2}
Under Assumptions~\ref{hyp1f} and~\ref{hyp1control}, $\bar{X}$ is recurrent in the sense of Harris, having invariant probability measure $\mesinv (dx) = p(x) dx$ with density 
$$ p (x) = C \frac{1}{f(x) } \exp \left( - \frac{2 \alpha }{\sigma^2 } \int_0^x \frac{y}{f(y)} dy \right) .$$ 
Besides, if Assumption~\ref{hyp2} holds, then for all $g\in C_b^3(\r)$ and $x\in\r,$
$$\left|P_t^N g(x)-\mesinv (g)\right|\leq\constantetruc||g||_{3,\infty}(1+x^2)\left(\frac{K_t}{\sqrt{N}}+e^{-\gamma t}\right),$$
where $\constantetruc$ and $\gamma$ are positive constants independent of $N$ and $t$, and where $K_t$ is defined in \eqref{eq:kt}. In particular, $P_t^N(x,\cdot)$ converges weakly to $\mesinv$ as $(N,t)\rightarrow(\infty,\infty)$, provided $K_t = o(\sqrt{N}).$

If we assume, in addition, that $\alpha>\frac76\sigma^2L^2$, then $P_t^N(x,\cdot)$ converges weakly to $\mesinv$ as $(N,t)\rightarrow(\infty,\infty)$ without any condition on the joint convergence of $(t,N)$, and we have, for any $g\in C_b^3(\r)$ and $x\in\r,$
$$\left|P_t^N g(x)-\mesinv (g)\right|\leq\constantetruc||g||_{3,\infty}(1+x^2)\left(\frac{1}{\sqrt{N}}+e^{-\gamma t}\right).$$
\end{thm}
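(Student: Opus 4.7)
The plan is to combine Theorem~\ref{convergencesemigroupe1}.(i) with a quantitative exponential ergodicity statement for the limit diffusion $\bar X$, glued together by the triangle inequality
\begin{equation*}
\left|P_t^N g(x) - \mesinv(g)\right| \leq \left|P_t^N g(x) - \bar P_t g(x)\right| + \left|\bar P_t g(x) - \mesinv(g)\right|.
\end{equation*}
The first term is already controlled by Theorem~\ref{convergencesemigroupe1}.(i), producing the $K_t/\sqrt N$ contribution. Everything then reduces to proving exponential ergodicity of $\bar P_t$ with an explicit polynomial dependence on the starting point.

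I would first identify the invariant measure by solving the stationary adjoint equation $\bar A^\ast p = 0$, where $\bar A g(x) = -\alpha x g'(x) + \frac{\sigma^2}{2} f(x) g''(x)$. One integration and the requirement that $fp$ and its derivative vanish at infinity reduce the equation to the first-order ODE $\frac{\sigma^2}{2}(fp)'(x) + \alpha x p(x) = 0$, whose unique (up to normalization) positive solution is exactly the density stated in the theorem. Integrability at infinity follows from the quadratic upper bound $f(x) \leq (\sqrt{f(0)}+L|x|)^2$ provided by Assumption~\ref{hyp1f}. Harris recurrence is then a standard consequence, for a scalar diffusion, of the non-vanishing of $\sqrt f$ (giving irreducibility) combined with the mean-reverting linear drift (giving positive recurrence via Lyapunov tightness).

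The core step is to prove that for every $g \in C_b^3(\r)$ and $x\in\r$,
\begin{equation*}
\left|\bar P_t g(x) - \mesinv(g)\right| \leq C(1+x^2)\|g\|_{3,\infty} e^{-\gamma t}.
\end{equation*}
My plan is to invoke the $V$-uniform ergodicity theorem of Meyn-Tweedie with the Lyapunov function $V(x) = (1+x^2)^\eta$ for an $\eta \in (0,1]$ calibrated so that the drift condition $\bar A V \leq -c V + K\indiq_{[-R,R]}$ holds. A direct computation using $f(x) \leq (\sqrt{f(0)}+L|x|)^2$ shows that the dominant term of $\bar A V$ at infinity is proportional to $[-\alpha + \sigma^2 L^2(2\eta-1)/2]\,(1+x^2)^\eta$, which is strictly negative as soon as $\eta < \tfrac12 + \alpha/(\sigma^2 L^2)$; since $\alpha > 0$, one may always pick such $\eta \in (0,1]$. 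The minorization condition on compact sets follows from the strong Feller property, itself guaranteed by the non-degeneracy of $\sqrt f$ and the smoothness furnished by Assumptions~\ref{hyp1f}-\ref{hyp2}. Using that $V(x) \leq 1+x^2$ for $\eta \leq 1$ and $\|g\|_\infty \leq \|g\|_{3,\infty}$, the conclusion $\|\bar P_t(x,\cdot) - \mesinv\|_V \leq C V(x) e^{-\gamma t}$ translates into the displayed bound.

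Adding the two contributions yields the main inequality. The weak convergence of $P_t^N(x,\cdot)$ to $\mesinv$ as $(N,t)\to(\infty,\infty)$ under $K_t = o(\sqrt N)$ is then immediate. Under the stronger hypothesis $\alpha > \frac{7}{6}\sigma^2 L^2$, the remark following the definition of $K_T$ shows $\sup_{t>0} K_t < \infty$, so $K_t/\sqrt N$ may be replaced by $1/\sqrt N$ and the convergence holds without any coupling of $N$ and $t$. The main obstacle is the exponential ergodicity step: one must calibrate $\eta$ so that the Foster-Lyapunov drift condition holds without imposing any quantitative strengthening of the baseline assumption $\alpha > 0$, while still delivering a bound whose weight $V(x)$ can be controlled by $(1+x^2)$; the choice $V(x) = (1+x^2)^\eta$ with $\eta$ sufficiently small threads this needle.
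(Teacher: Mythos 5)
Your proposal is correct and follows essentially the same route as the paper: the triangle inequality combining Theorem~\ref{convergencesemigroupe1}.(i) with a Meyn--Tweedie $V$-uniform ergodicity bound for $\bar X$ based on the Foster--Lyapunov drift condition for $V(x)=(1+x^2)^\eta$ with a small exponent (the paper takes $q<\tfrac12$ so that $V''<0$ at infinity, which makes the diffusion term harmless without any condition beyond $\alpha>0$), followed by $V(x)\le 1+x^2$ and the remark that $\sup_t K_t<\infty$ when $\alpha>\tfrac76\sigma^2L^2$. The only differences are cosmetic (identifying the invariant density via the stationary adjoint equation rather than the scale-function criterion, and verifying petiteness of compacts via the strong Feller property rather than Feller plus $\mesinv$-irreducibility), so the argument matches the paper's proof in substance.
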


Theorem~\ref{convergencesemigroupe2} is proved in the end of Section~\ref{convergencetransition}.

Finally, using Theorem~\ref{convergencesemigroupe1}$.(ii)$, we show the convergence of the point processes $Z^{N,i}$ defined in~\eqref{ZNit} to limit point processes $\bar{Z}^i$ having stochastic intensity $ f(\bar X_t) $ at time $t.$  To define the processes $\bar{Z}^i$ ($i\in\n^*$), we fix a Brownian motion $(B_t)_{t\geq 0}$ on some probability space different from the one where the processes $X^N$ ($N\in\n^*$) and the Poisson random measures $\pi_i$ ($i\in\n^*$) are defined. Then we fix a family of i.i.d. Poisson random measures $\bar{\pi}_i$ ($i\in\n^*$) on the same space as $(B_t)_{t\geq 0} , $  independent of $(B_t)_{t\geq 0}$. The limit point processes $ \bar Z^i $ are then defined by
\begin{equation}
\label{barZit}
\bar{Z}^i_t=\int_{]0,t]\times\r_+\times\r}\uno{z\leq f\left(\bar{X}_{s}\right)}d\bar\pi_i(s,z,u).
\end{equation}

\begin{thm}
\label{convergenceZNZ}
Under Assumptions~\ref{hyp1f},~\ref{hyp1control} and~\ref{hyp1w}, for every $k\in\n^*,$ the sequence $(Z^{N,1},\hdots,Z^{N,k})_N$ converges to $(\bar{Z}^1,\hdots,\bar{Z}^k)$ in distribution in $D(\r_+,\r^k)$. Consequently, the sequence $(Z^{N,j})_{j\geq 1}$ converges to $(\bar Z^j)_{j\geq 1}$ in distribution in $D(\r_+,\r)^{\n^*}$ for the product topology.
\end{thm}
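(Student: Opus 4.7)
The plan is to combine the convergence of $X^N$ to $\bar X$ from Theorem~\ref{convergencesemigroupe1}$(ii)$ with a tightness-and-identification argument for the joint sequence $(X^N, Z^{N,1}, \ldots, Z^{N,k})_N$ in $D(\r_+, \r^{k+1})$. The starting point is the semimartingale decomposition
$$ Z^{N,i}_t = \int_0^t f(X^N_s)\, ds + M^{N,i}_t,$$
where, for each $i$, $M^{N,i}$ is a purely discontinuous square-integrable martingale with predictable quadratic variation $\langle M^{N,i}\rangle_t = \int_0^t f(X^N_s)\, ds$, and the family $(M^{N,i})_{1\le i\le k}$ is pairwise orthogonal since the $\pi_i$ are independent Poisson random measures and hence share no common atoms.

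First, I would prove joint tightness. Tightness of the first marginal is provided by Theorem~\ref{convergencesemigroupe1}$(ii)$. For each $i$, tightness of $Z^{N,i}$ follows from Aldous's criterion applied to the decomposition above: the compensator $\int_0^t f(X^N_s)\,ds$ is Lipschitz in $t$ with rate controlled by the bound $f(x)\le C(1+x^2)$ combined with the uniform-in-$N$ moment bounds from Lemma~\ref{controlmoments}, and the martingale part is handled by Doob's inequality using $\E\langle M^{N,i}\rangle_t \le C(1+t)(1+\E[(X^N_0)^2])$. Tightness of each marginal then yields tightness of the joint law in $D(\r_+,\r^{k+1})$.

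Second, I would identify the limit. Along a convergent subsequence, let $(Y, V^1, \ldots, V^k)$ be a limit point. The marginal convergence of $X^N$ forces $Y \stackrel{d}{=} \bar X$. The processes $V^i$ are non-decreasing $\mathbb{N}$-valued càdlàg processes; passing to the limit in $[Z^{N,i}, Z^{N,j}]=0$ ($i\ne j$) shows that distinct $V^i$ share no jump times. Using the continuity of $f$, its polynomial growth, the uniform moment control, and the convergence of $X^N$, I would pass to the limit in the martingale relations satisfied by $Z^{N,i}$ and by $(Z^{N,i})^2 - \int_0^\cdot f(X^N_s)\,ds$ to conclude that $V^i_t - \int_0^t f(Y_s)\,ds$ are mutually orthogonal martingales in the natural filtration of $(Y, V^1, \ldots, V^k)$ with predictable quadratic variations $\int_0^t f(Y_s)\, ds$. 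By Watanabe's characterization of (conditionally) inhomogeneous Poisson processes, applied conditionally on $Y$, this identifies $(V^1, \ldots, V^k)$ in law with $(\bar Z^1, \ldots, \bar Z^k)$. Since the limit is unique, the whole sequence converges, and the extension from fixed $k$ to the infinite product $(Z^{N,j})_{j\ge 1}$ in the product topology on $D(\r_+,\r)^{\n^*}$ is automatic, as convergence in the product topology is equivalent to convergence of every finite projection.

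The main obstacle is the passage to the limit in the martingale relations and in the predictable quadratic variations: although the compensator $\int_0^t f(X^N_s)\,ds$ converges naturally by the continuity of $f$ and the convergence of $X^N$ (combined with the continuous-mapping theorem and the a.s. continuity of $\bar X$), transferring the martingale property and the orthogonality to the limit requires weak convergence plus uniform integrability, typically via a truncation/localization step together with the fourth-moment bounds provided by Lemma~\ref{controlmoments}.
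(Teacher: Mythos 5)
Your plan is a genuinely different route from the paper's. The paper does not run tightness-plus-identification by hand: it applies Theorem~IX.4.21/IX.4.15 of Jacod--Shiryaev to the $(k{+}1)$-dimensional semimartingale $(X^N,Z^{N,1},\ldots,Z^{N,k})$, computes its modified characteristics, and checks the only nontrivial hypothesis -- convergence of the jump kernels $g*K^N\to g*K$ for test functions vanishing near the origin -- via a Markov-type bound using the finite fourth moment of $\mu$; tightness and identification of the limit are packaged inside that theorem. Your classical "tightness + martingale problem" strategy could in principle work, but as written it has two weak points, one minor and one genuine.

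The minor one: "tightness of each marginal yields tightness of the joint law in $D(\r_+,\r^{k+1})$" is not a valid general principle (tightness of the coordinates only gives tightness for the product topology, which is strictly weaker than the Skorokhod topology on $\r^{k+1}$-valued paths). It is repairable here, because your Aldous estimates hold at stopping times of the joint filtration and hence apply directly to the $\r^{k+1}$-valued process (alternatively, $X^N$ is C-tight and the counting components can be treated jointly), but you should say so. The genuine gap is the last step, "Watanabe's characterization applied conditionally on $Y$". What you will have established is that $V^i_t-\int_0^t f(Y_s)\,ds$ are orthogonal martingales in the natural filtration of $(Y,V^1,\ldots,V^k)$. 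This does not allow you to condition on the whole path of $Y$ and keep $\int_0^t f(Y_s)\,ds$ as compensator: that is an enlargement-of-filtration (immersion) property, and it is exactly the conditional independence structure you are trying to prove, so invoking it is circular. To close the argument you must either (a) show that any limit point solves the $(k{+}1)$-dimensional martingale problem with generator $\mathcal{L}g(x,z)=-\alpha x\,\partial_x g+\tfrac{\sigma^2}{2}f(x)\,\partial_x^2 g+f(x)\sum_{i=1}^k\bigl(g(x,z+e^i)-g(x,z)\bigr)$ (using that $Y$ is continuous, so $[Y,V^i]=0$, and that the $V^i$ have no common jumps), and then prove well-posedness of this joint martingale problem (e.g.\ via its representation as an SDE driven by a Brownian motion and independent Poisson random measures together with pathwise uniqueness), whose unique solution is precisely the law of $(\bar X,\bar Z^1,\ldots,\bar Z^k)$; or (b) use the paper's route through the Jacod--Shiryaev limit theorems, which avoids this issue altogether. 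The final extension from fixed $k$ to the product topology is indeed immediate, as you say (the paper cites Kallenberg for it).
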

Let us give a brief interpretation of the above result. Conditionally on $\bar{X}$, for any $ k > 1, $  $\bar{Z}^1, \ldots, \bar{Z}^k$ are independent. Therefore, the above result can be interpreted as a conditional propagation of chaos property (compare  to \cite{delarue} dealing with the situation where all interacting components are subject to common noise). In our case, the common noise, that is, the Brownian motion $ B$ driving the dynamic of $ \bar X,$ emerges in the limit as a consequence of the central limit theorem. 
Theorem~\ref{convergenceZNZ} is proved in the end of Section~\ref{convergencepoint}.

\begin{rem}
In Theorem~\ref{convergenceZNZ}, we implicitly define $Z^{N,i}:=0$ for each $i\geq N+1$.
\end{rem}

%\begin{rem}
%However, in Theorem~\ref{convergenceZNZ} the convergence in the space $D(\r_+,\r^{\n^*})$ cannot hold true. Indeed, for every $N\in\n^*,$ consider any $k\geq N+1,$ then $Z^{N,k}:=0$ and $\bar Z^k$ is a point process having non-zero intensity, TODOTODO
%\end{rem}

\section{Proof of Theorem~\ref{convergencesemigroupe1}}\label{convergencesemigroups}

The goal of this section is to prove Theorem~\ref{convergencesemigroupe1}. To prove the convergence of the semigroups of $(X^N)_N$, we show in a first time  the convergence of their generators. We start with useful a priori bounds on the moments of $X^N$ and $\bar{X}$.

\begin{lem}
\label{controlmoments}
Under Assumptions~\ref{hyp1f} and~\ref{hyp1control}, the following holds.
\begin{itemize}
\item[(i)] For all $\eps>0,~t>0$ and $x\in\r,~\espcc{x}{(X^N_t)^2}\leq \constantetruc(1+1/\eps)(1+x^2)(1+e^{(\sigma^2L^2-2\alpha+\eps)t}),$ for some $\constantetruc>0$ independent of $N, t,x$ and $\eps.$
\item[(ii)] For all $\eps>0,~t>0$ and $x\in\r,~\espcc{x}{(\bar{X}_t)^2}\leq \constantetruc(1+1/\eps)(1+x^2)(1+e^{(\sigma^2L^2-2\alpha+\eps)t}),$ for some $\constantetruc>0$ independent of $t,x$ and $\eps.$
\item[(iii)] For all $N\in\n^*,T>0,~\esp{({\sup}_{0\leq t\leq T}~|X_t^N|)^2}<+\infty$ and $\esp{({\sup}_{0\leq t\leq T}~|\bar{X}_t|)^2}<+\infty.$
\item[(iv)] For all $T>0,N\in\n^*,~\underset{0\leq t\leq T}{\sup}\espcc{x}{(X_t^N)^4}\leq \constantetruc_T(1+x^4)$ and $\underset{0\leq t\leq T}{\sup}\espcc{x}{(\bar{X}_t)^4}\leq \constantetruc_T(1+x^4).$
\item [(v)] {For all $0\leq s,t\leq T$ and $x\in\r$, $$\espcc{x}{\left(\bar{X}_{t}-\bar{X}_s\right)^2}\leq \constantetruc_{T}(1+x^2)|t-s|\textrm{ and }\espccc{x}{N}{\left(X_{t}^N-X_s^N\right)^2}\leq\constantetruc_{T}(1+x^2)|t-s|.$$}

\end{itemize}
\end{lem}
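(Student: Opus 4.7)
The plan is to obtain (i)--(v) by applying Itô/Dynkin's formula to the polynomial test functions $\phi(x)=x^2$ (for (i)--(ii)) and $\phi(x)=x^4$ (for (iv)), closing the resulting moment ODEs via Young's inequality and Grönwall's lemma, and then deducing the supremum bound (iii) and the increment bound (v) from the semimartingale decomposition combined with Doob's $L^2$ inequality and the Burkholder--Davis--Gundy inequality. All steps are first carried out for the processes stopped at $\tau_n=\inf\{t:\,|X^N_t|\vee|\bar X_t|\ge n\}$ and then passed to the limit by Fatou/monotone convergence, which justifies the use of Dynkin's formula on the unbounded test functions $x^2$ and $x^4$.

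For (i), the generator $A^N$ applied to $\phi(x)=x^2$ simplifies considerably: since $\mu$ is centered with variance $\sigma^2$,
\[
A^N\phi(x)=-2\alpha x^2+Nf(x)\int_\R\bigl[2xu/\sqrt N+u^2/N\bigr]\mu(du)=-2\alpha x^2+\sigma^2 f(x),
\]
yielding $\frac{d}{dt}\espccc{x}{N}{(X^N_t)^2}=-2\alpha\,\espccc{x}{N}{(X^N_t)^2}+\sigma^2\,\espccc{x}{N}{f(X^N_t)}$. From Assumption~\ref{hyp1f} one has $\sqrt{f(x)}\le\sqrt{f(0)}+L|x|$, and Young's inequality $(a+b)^2\le(1+\eta)a^2+(1+1/\eta)b^2$ with $\eta=\eps/(\sigma^2L^2)$ gives $\sigma^2 f(x)\le(\sigma^2L^2+\eps)x^2+\constantetruc(1+1/\eps)$. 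Grönwall's inequality then produces (i), with some bookkeeping to absorb the residual $1/(\sigma^2L^2-2\alpha+\eps)$ prefactor into the claimed $\constantetruc(1+1/\eps)(1+e^{(\sigma^2L^2-2\alpha+\eps)t})$ shape. Assertion (ii) follows identically, since Itô's formula for $\bar X$ applied to $\phi(x)=x^2$ gives $\bar A\phi(x)=-2\alpha x^2+\sigma^2 f(x)$, i.e.\ the same ODE for $\espcc{x}{\bar X_t^2}$.

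For (iii), use the semimartingale decompositions
\[
X^N_t=X^N_0-\alpha\int_0^tX^N_s\,ds+M^N_t,\qquad\bar X_t=\bar X_0-\alpha\int_0^t\bar X_s\,ds+\sigma\int_0^t\sqrt{f(\bar X_s)}\,dB_s,
\]
where $M^N$ is a pure-jump square-integrable martingale with $\langle M^N\rangle_t=\sigma^2\int_0^tf(X^N_s)\,ds$. Doob's $L^2$ inequality gives $\E[\sup_{t\le T}|M^N_t|^2]\le 4\sigma^2\int_0^T\E[f(X^N_s)]\,ds$, which is finite by (i) together with $f(x)\le\constantetruc(1+x^2)$; Cauchy--Schwarz on the drift integral closes the argument for $X^N$, and BDG does the analogous job for the Brownian integral in the $\bar X$ case. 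For (iv), Itô on $\phi(x)=x^4$ gives $\bar A\phi(x)=-4\alpha x^4+6\sigma^2x^2f(x)$, and expanding $(x+u/\sqrt N)^4-x^4$ and integrating against $\mu$ (whose third and fourth moments $m_3,m_4$ are finite by Assumption~\ref{hyp1control}),
\[
A^N\phi(x)=-4\alpha x^4+6\sigma^2x^2f(x)+\frac{4m_3}{\sqrt N}xf(x)+\frac{m_4}{N}f(x).
\]
Using $f(x)\le\constantetruc(1+x^2)$ one gets $|A^N\phi(x)|,|\bar A\phi(x)|\le\constantetruc(1+x^4)$ plus lower-order terms in $x^2$, and a single Grönwall on $[0,T]$ (controlling the residual $\E[(X^N_t)^2]$ contributions via (i)--(ii)) yields (iv).

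Finally, (v) follows from the same decompositions: Cauchy--Schwarz on $\int_s^tX^N_r\,dr$ produces a term bounded by $\alpha^2(t-s)\int_s^t\E[(X^N_r)^2]\,dr$, while the quadratic-variation identity $\E[(M^N_t-M^N_s)^2]=\sigma^2\int_s^t\E[f(X^N_r)]\,dr$ (respectively Itô's isometry for the Brownian integral) handles the noise part, both bounded by $\constantetruc_T(1+x^2)(t-s)$ thanks to (ii) and $f(x)\le\constantetruc(1+x^2)$. The main delicate point in the whole proof is the bookkeeping in (i)--(ii) that delivers the precise $\constantetruc(1+1/\eps)(1+e^{(\sigma^2L^2-2\alpha+\eps)t})$ shape uniformly in $\eps>0$; once that is in hand, the remaining estimates are routine consequences of Itô calculus, Grönwall, and standard martingale inequalities.
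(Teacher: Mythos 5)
Your proposal is correct and follows essentially the same route as the paper: Itô/Dynkin applied to the Lyapunov functions $x^2$ and $x^4$ (with localization and Fatou), Young's inequality to split the linear term and produce the drift bound with constant $\sigma^2L^2-2\alpha+\eps$, a Grönwall/exponential-weighting step yielding the $e^{(\sigma^2L^2-2\alpha+\eps)t}$ shape, Doob/BDG for the running supremum in (iii)--(iv), and Itô isometry together with the sub-quadraticity of $f$ for the increment bound (v). The only differences are cosmetic -- the paper multiplies by $e^{-c_\eps t}$ before taking expectations instead of invoking Grönwall, and applies BDG to each Poisson integral separately rather than Doob to the aggregated compensated martingale -- so nothing further is needed.
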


We postpone the proof of Lemma~\ref{controlmoments} to the Appendix. The inequalities of points $(i)$ and $(ii)$ of the lemma hold for any fixed $\eps>0$. This parameter $\eps$ appears for the following reason. We prove the above points using the Lyapunov function $x\mapsto x^2.$ When applying the generators to this function, there are terms of order $x$ that appear and that we bound by $x^2\eps+\eps^{-1}$ to be able to compare it to~$x^2.$

%Roughly speaking, we prove these points showing a Lyapunov condition on the infinitesimal generators with the function $x\mapsto x^2,$ and in the bounds of the generators, there are terms $x$ that we bound by $x^2\eps+\eps^{-1}.$

\subsection{Convergence of the generators}\label{convergencegenerators}

Throughout this paper, we consider extended generators similar to those used in~\cite{meyn} and in~\cite{davis}, because the classical notion of generator does not suit to our framework (see the beginning of Section~\ref{extendedgenerator}). As this definition slightly differs from one reference to another, we define explicitly the extended generator in Definition~\ref{definitiongenerateur} below and we prove the results on extended generators that we need in this paper. We note $A^N$ the extended generator of $X^N$ and $\bar{A}$ the one of $\bar{X}$, and $\mathcal{D}'(A^N)$ and $\mathcal{D}'(\bar{A})$ their extended domains. The goal of this section is to prove the convergence of $A^Ng(x)$ to $\bar{A}g(x)$ and to establish the rate of convergence for test functions $g\in C_b^3(\r)$. Before proving this convergence, we state a lemma which characterizes the generators for some test functions. This lemma is a straightforward consequence of It\^o's formula and Lemma~\ref{controlmoments}.(i).

\begin{lem}
\label{expressiongenerateur}
$C_b^2(\r)\subseteq\mathcal{D}'(\bar{A})$, and for all $g\in C_b^2(\r)$ and $x\in\r$, we have
$$\bar{A}g(x)=-\alpha xg'(x)+\frac{1}{2}\sigma^2f(x)g''(x).$$
Moreover, $C_b^1(\r)\subseteq\mathcal{D}'(A^N)$, and for all $g\in C_b^1(\r)$ and $x\in\r$, we have
$$A^Ng(x)=-\alpha x g'(x)+Nf(x)\int_\r\left[g\left(x+\frac{u}{\sqrt{N}}\right)-g(x)\right]d\mu(u).$$
\end{lem}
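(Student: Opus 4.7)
My plan is to apply It\^o's formula to $g(\bar{X}_t)$ and $g(X_t^N)$, take expectations, and then differentiate at $t=0^+$ in order to identify the pointwise derivatives of the semigroups with the expressions announced for $\bar{A}g$ and $A^Ng$, thus placing $g$ in the respective extended domains.

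First, for $g\in C_b^2(\r)$, It\^o's formula applied to $g(\bar X_t)$ using~\eqref{sde} gives
$$g(\bar X_t) = g(\bar X_0) + \int_0^t \bigl[-\alpha \bar X_s g'(\bar X_s) + \tfrac12\sigma^2 f(\bar X_s) g''(\bar X_s)\bigr]\,ds + \int_0^t \sigma\sqrt{f(\bar X_s)}\,g'(\bar X_s)\,dB_s,$$
and for $g\in C_b^1(\r)$, the jump-semimartingale It\^o formula applied to~\eqref{XtNdefinition} yields
$$g(X_t^N) = g(X_0^N) - \alpha\int_0^t X_s^N g'(X_s^N)\,ds + \sum_{j=1}^N \int_{]0,t]\times\r_+\times\r}\bigl[g(X_{s-}^N + u/\sqrt N) - g(X_{s-}^N)\bigr]\uno{z\leq f(X_{s-}^N)}\,d\pi_j(s,z,u).$$

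Next, I would take $\espcc{x}{\cdot}$ on both sides and argue that the stochastic integral contributions vanish. For the Brownian integral, the bound $|g'|\leq\|g'\|_\infty$ together with $f(y)\leq C(1+y^2)$ (Assumption~\ref{hyp1f}) bounds the expected quadratic variation by $C\|g'\|_\infty^2\int_0^t(1+\espcc{x}{(\bar X_s)^2})ds$, which is finite by Lemma~\ref{controlmoments}.(i)--(ii); it is therefore a true martingale. For the Poisson integrals, I would compensate them by $ds\,dz\,d\mu(u)$: the compensator contributes exactly $\int_0^t Nf(X_s^N)\int_\r[g(X_s^N+u/\sqrt N)-g(X_s^N)]d\mu(u)\,ds$, while the compensated martingale has expected quadratic variation controlled via $|g(x+u/\sqrt N)-g(x)|\leq\|g'\|_\infty|u|/\sqrt N$, the assumption $\int u^2 d\mu(u)=\sigma^2<\infty$, and the moment bounds of Lemma~\ref{controlmoments}.(i). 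These steps give
$$P_t g(x) - g(x) = \int_0^t \espcc{x}{\bar A g(\bar X_s)}\,ds, \qquad P_t^N g(x) - g(x) = \int_0^t \espcc{x}{A^N g(X_s^N)}\,ds,$$
with $\bar A g$ and $A^N g$ as in the statement.

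Finally, I would divide by $t$ and let $t\to 0^+$. The integrands are right-continuous at $s=0$ since $\bar X_s, X_s^N\to x$ a.s.\ and since $\bar A g$ and $A^N g$ are continuous on $\r$; they are uniformly dominated on $[0,1]$ because $|\bar A g(y)|\leq C\|g\|_{2,\infty}(1+y^2)$ and $|A^N g(y)|\leq C_N\|g\|_{1,\infty}(1+|y|)$, combined with the uniform bounds on $\espcc{x}{(\bar X_s)^2}$ and $\espcc{x}{(X_s^N)^2}$ from Lemma~\ref{controlmoments}.(i)--(ii). Dominated convergence then gives the right-hand derivatives at $0$ equal to $\bar A g(x)$ and $A^N g(x)$, placing $g$ in the extended domains as claimed. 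The main obstacle, though routine, will be verifying the true-martingale property of the stochastic integrals under the mild smoothness $g\in C_b^2$ or $g\in C_b^1$; this is precisely what Lemma~\ref{controlmoments}.(i)--(ii) is designed to enable.
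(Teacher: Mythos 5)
Your proposal is correct and takes essentially the same route as the paper, which disposes of this lemma by remarking that it "is a straightforward consequence of It\^o's formula and Lemma~\ref{controlmoments}.(i)": It\^o's formula, compensation of the Poisson integrals, the true-martingale property of the stochastic integrals via the second-moment bounds, and the resulting identity $P_tg(x)-g(x)=\int_0^t P_sAg(x)\,ds$ together with your domination/continuity argument, which also covers the continuity-at-$0$ requirement in Definition~\ref{definitiongenerateur}. One cosmetic slip: $|A^Ng(y)|$ grows quadratically in $y$ (through the factor $f(y)\leq C(1+y^2)$), not linearly as you wrote, but this is harmless since you dominate with the second-moment bounds anyway.
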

The following result is the first step towards the proof of our main result.
\begin{prop}
\label{convergencegenerator}
If Assumptions~\ref{hyp1f} and~\ref{hyp1control} hold, then for all $g\in C^3_b(\r)$,
$$\left|\bar{A}g(x)-A^Ng(x)\right|\leq f(x)\, \|g'''\|_\infty\frac{1}{6\sqrt{N}}\int_{\r}|u|^3d\mu(u).$$
\end{prop}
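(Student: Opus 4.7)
The plan is straightforward: use Lemma~\ref{expressiongenerateur} to write both generators explicitly, observe that the transport term $-\alpha x g'(x)$ cancels, and then do a third-order Taylor expansion of $g$ around $x$ inside the jump term of $A^N$.

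More precisely, from Lemma~\ref{expressiongenerateur} we have
\[
\bar A g(x) - A^N g(x) = \tfrac12 \sigma^2 f(x) g''(x) - N f(x) \int_\r \bigl[g(x + u/\sqrt N) - g(x)\bigr] d\mu(u).
\]
I would factor out $f(x)$ and then apply Taylor's formula with integral remainder (or Taylor-Lagrange) at order $3$ to $g(x+u/\sqrt N)$, writing
\[
g(x + u/\sqrt N) = g(x) + \tfrac{u}{\sqrt N} g'(x) + \tfrac{u^2}{2N} g''(x) + R(x,u,N),
\]
with $|R(x,u,N)| \leq \tfrac{|u|^3}{6 N^{3/2}} \|g'''\|_\infty$.

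Next I would integrate against $\mu$ and multiply by $N$: the first-order term vanishes because $\mu$ is centered (by Assumption~\ref{hyp1control}, $\int u\, d\mu(u) = 0$), and the second-order term gives exactly $\tfrac12 \sigma^2 g''(x)$ since $\int u^2 d\mu(u) = \sigma^2$. Thus the $\tfrac12 \sigma^2 f(x) g''(x)$ terms cancel, leaving
\[
\bar A g(x) - A^N g(x) = - f(x) \cdot N \int_\r R(x,u,N)\, d\mu(u),
\]
and the announced bound follows by taking absolute values and using the pointwise estimate on $R$. The finiteness of $\int |u|^3 d\mu(u)$ is guaranteed by Assumption~\ref{hyp1control}, which imposes a finite fourth moment on $\mu$ (hence a finite third absolute moment by Jensen or Hölder).

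There is no real obstacle here — the proposition is essentially a one-line Taylor-expansion computation once the explicit formulas for $\bar A$ and $A^N$ are available. The only thing to be slightly careful about is to use the third-order Taylor remainder (rather than second-order), so that both the linear term (which vanishes by centering) and the quadratic term (which produces the diffusion coefficient) are captured exactly, with only the cubic remainder contributing to the error.
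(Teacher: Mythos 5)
Your proposal is correct and follows essentially the same route as the paper: write both generators via Lemma~\ref{expressiongenerateur}, cancel the drift term, use the centering of $\mu$ and $\int u^2 d\mu = \sigma^2$ to absorb the first- and second-order Taylor terms, and bound the third-order (Taylor--Lagrange) remainder to get the factor $\frac{1}{6\sqrt N}\int|u|^3 d\mu(u)$. Nothing is missing.
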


\begin{proof}
For $g\in C_b^3(\r)$, if we note $U$ a random variable having distribution~$\mu$, we have, since $ \esp{U} = 0, $ 
\begin{align*}
\left|A^Ng(x)-\bar{A}g(x)\right|\leq&f(x)\left|N\esp{g\left(x+\frac{U}{\sqrt{N}}\right)-g(x)}-\frac{1}{2}\sigma^2g''(x)\right|\\
=&f(x)N\left|\esp{g\left(x+\frac{U}{\sqrt{N}}\right)-g(x)-\frac{U}{\sqrt{N}}g'(x)-\frac{U^2}{2N}g''(x)}\right|\vspace{0.1cm}\\
\leq&f(x)N\esp{\left|g\left(x+\frac{U}{\sqrt{N}}\right)-g(x)-\frac{U}{\sqrt{N}}g'(x)-\frac{U^2}{2N}g''(x)\right|}.
\end{align*}

Using Taylor-Lagrange's inequality, we obtain the result.
\end{proof}

\subsection{Convergence of the semigroups}\label{convergencesemigroups2}

Once the convergence $A^Ng(x)\rightarrow \bar{A}g(x)$ is established, together with a control of the speed of convergence, our strategy is to rely on the following representation 
\begin{equation}\label{TBbis}
\left(\bar{P}_t-P^N_t\right)g(x)=\int_0^tP^N_{t-s}\left(\bar{A}-A^N\right)\bar{P}_sg(x)ds,
\end{equation}
which is proven in Proposition~\ref{generatorsemigroup} in the Appendix.

Obviously, to be able to apply Proposition \ref{convergencegenerator} to the above formula, we need to ensure the regularity of $x\mapsto \bar{P}_sg(x),$ together with a control of the associated norm $||\bar{P}_sg||_{3, \infty}$. This is done in the next proposition.

\begin{prop}
\label{regularityPt}
If Assumptions~\ref{hyp1f},~\ref{hyp1control} and~\ref{hyp2} hold, then for all $t\geq 0$ and for all $g\in C^3_b(\r)$, the function $x\mapsto\bar{P}_tg (x) $ belongs to $C^3_b(\r)$ and satisfies
\begin{equation}\label{eq:qt1}
\left|\left|\left(\bar{P}_tg\right)'''\right|\right|_\infty\leq \constantetruc||g||_{3,\infty}(1+t^2)e^{\beta t},
\end{equation}
with $\beta = \max(\frac12\sigma^2L^2-\alpha,2\sigma^2L^2-2\alpha,\frac72\sigma^2L^2-3\alpha).$
{Moreover, for all $T>0,$ 
\begin{equation}\label{eq:qt2}
{\sup}_{0\leq t\leq T}||\bar{P}_tg||_{3,\infty}\leq Q_T||g||_{3,\infty}
\end{equation}
for some $Q_T>0,$ and
for all  $i\in\{0,1,2\}$ and $x\in\r$, $s\mapsto (\bar{P}_sg)^{(i)}(x)=\frac{\partial^i}{\partial x^i}(\bar{P}_sg(x))$ is continuous.}
\end{prop}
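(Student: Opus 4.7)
The plan is to represent $\bar{P}_t g(x) = \mathbb{E}[g(\bar{X}_t^{(x)})]$, where $\bar{X}_t^{(x)}$ denotes the solution of~\eqref{sde} starting from $x$, and to differentiate three times under the expectation with respect to $x$. Under Assumption~\ref{hyp2}, the drift $b(x)=-\alpha x$ and the diffusion coefficient $\tilde{\sigma}(x)=\sigma\sqrt{f(x)}$ are of class $C^4$ with uniformly bounded derivatives of orders $1,2,3,4$ (quantified by $\alpha$ and $m_1,\ldots,m_4$), so classical results on the differentiability of stochastic flows (Chapter~V of~\cite{ikeda}) ensure that $x\mapsto\bar{X}_t^{(x)}$ is three times continuously differentiable almost surely, with derivatives $Y_t^k:=\partial^k\bar{X}_t^{(x)}/\partial x^k$ ($k=1,2,3$) solving the linear SDEs obtained by formal differentiation of~\eqref{sde}.

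The key step is to control $\mathbb{E}|Y_t^k|^p$ for the relevant $(k,p)$. The first derivative $Y_t^1$ is the Dol\'eans--Dade exponential
\[
Y_t^1=\exp\!\left(-\alpha t+\sigma\int_0^t(\sqrt{f})'(\bar{X}_s)\,dB_s-\tfrac{\sigma^2}{2}\int_0^t((\sqrt{f})'(\bar{X}_s))^2ds\right),
\]
from which, using that $|(\sqrt{f})'|\le L$ (Assumption~\ref{hyp1f}) together with a Girsanov-type change of measure, one obtains $\mathbb{E}[(Y_t^1)^p]\le\exp\!\big((\tfrac{p(p-1)}{2}\sigma^2 L^2-\alpha p)t\big)$ for every $p\ge 1$. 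The higher derivatives satisfy linear SDEs of the form $dY_t^k=-\alpha Y_t^k\,dt+\sigma(\sqrt{f})'(\bar{X}_t)Y_t^k\,dB_t+\Xi_t^k\,dB_t$, where $\Xi_t^k$ is a polynomial of degree $k$ in $Y_t^1,\ldots,Y_t^{k-1}$ whose coefficients are evaluations of the bounded functions $(\sqrt{f})^{(j)}$, $j\le k$. Solving by variation of parameters and bounding the resulting stochastic integrals by the Burkholder--Davis--Gundy inequality combined with Cauchy--Schwarz, one obtains moment bounds of the form $\mathbb{E}|Y_t^k|^p\le C_{k,p}(1+t^{k-1})e^{\lambda_{k,p}t}$ for explicit exponents $\lambda_{k,p}$.

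Differentiating $\bar{P}_t g(x)=\mathbb{E}[g(\bar{X}_t^{(x)})]$ three times under the expectation via the chain rule gives
\[
(\bar{P}_t g)'''(x)=\mathbb{E}\!\left[g'''(\bar{X}_t^{(x)})(Y_t^1)^3+3g''(\bar{X}_t^{(x)})\,Y_t^1Y_t^2+g'(\bar{X}_t^{(x)})\,Y_t^3\right],
\]
so that $|(\bar{P}_t g)'''(x)|\le\|g\|_{3,\infty}\big(\mathbb{E}[(Y_t^1)^3]+3\mathbb{E}|Y_t^1Y_t^2|+\mathbb{E}|Y_t^3|\big)$. Substituting the moment estimates of the previous step produces exactly the three exponential rates appearing in~\eqref{eq:beta}, while the polynomial prefactor $(1+t^2)$ comes from the iterated BDG estimate on $\mathbb{E}|Y_t^3|$; this proves~\eqref{eq:qt1}. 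The same strategy applied to $(\bar{P}_t g)'$ and $(\bar{P}_t g)''$, which only involve $Y_t^1$ and $Y_t^2$, yields uniform-in-$t$ bounds on $[0,T]$ and hence~\eqref{eq:qt2}.

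Finally, continuity of $s\mapsto(\bar{P}_s g)^{(i)}(x)$ for $i\in\{0,1,2\}$ is a direct consequence of dominated convergence: the processes $s\mapsto\bar{X}_s^{(x)}$ and $s\mapsto Y_s^k$ have continuous trajectories a.s.\ (as solutions of SDEs with continuous coefficients), and the moment bounds obtained above supply the integrable domination on any compact time interval. The main obstacle is the precise bookkeeping in the second step, particularly for $\mathbb{E}|Y_t^3|$, where combining the BDG inequality with Cauchy--Schwarz on a stochastic integral whose integrand is already a polynomial in $Y_s^1,Y_s^2$ must be carried out carefully to match the sharp exponential rate $\tfrac{7}{2}\sigma^2 L^2-3\alpha$ and the prefactor $(1+t^2)$ that appear in the statement.
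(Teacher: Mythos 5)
Your proposal follows essentially the same route as the paper: represent $\bar{P}_tg(x)=\esp{g(\bar{X}_t^{(x)})}$, invoke smoothness of the stochastic flow (the paper uses Kunita's theorem rather than Ikeda--Watanabe, but with the same hypotheses on $b$ and $\sigma\sqrt{f}$), identify the first derivative as a Dol\'eans--Dade exponential to get $p$-th moment bounds, treat the second and third derivatives as linear SDEs solved by variation of parameters (the paper's $Z_t=V_t/U_t$ and $Z'_t=W_t/U_t$ reductions) estimated via Burkholder--Davis--Gundy and Cauchy--Schwarz, and conclude by differentiating under the expectation and by uniform integrability for the continuity in $s$. The argument is correct and matches the paper's proof in both structure and the resulting exponents.
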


The proof of Proposition~\ref{regularityPt} requires some detailed calculus to obtain the explicit expression for~$\beta$, so we postpone it to the Appendix.

\begin{proof}[Proof of Theorem~\ref{convergencesemigroupe1}]

{\bf Step 1.} The proof of point $(i)$ is a straightforward consequence of Proposition~\ref{convergencegenerator}, since, applying formula \eqref{TBbis}, 
\begin{align*}
\left|\bar{P}_tg(x)-P^N_tg(x)\right|= & \left|\displaystyle\int_0^tP^N_{t-s}\left(\bar{A}-A^N\right)\bar{P}_sg(x)ds\right|\\
\leq&\displaystyle\int_0^t\espccc{x}{N}{\left|\bar{A}\left(\bar{P}_sg\right)\left(X_{t-s}^N\right)-A^N\left(\bar{P}_sg\right)\left(X_{t-s}^N\right)\right|}ds\\
\leq& \constantetruc\frac{1}{\sqrt{N}}\int_0^t\left|\left|\left(\bar{P}_sg\right)'''\right|\right|_\infty\espccc{x}{N}{f\left(X_{t-s}^N\right)}ds\\
\leq& \constantetruc\frac{1}{\sqrt{N}}||g||_{3,\infty}\int_0^t\left((1+s^2)e^{\beta s}\left(1+\espccc{x}{N}{\left(X^N_{t-s}\right)^2}\right)\right)ds\\
\leq& \constantetruc\left(1+\frac1\eps\right)\frac{1}{\sqrt{N}}||g||_{3,\infty}(1+x^2)\int_0^t(1+s^2)e^{\beta s}\left(1+e^{(\sigma^2L^2-2\alpha+\eps)(t-s)}\right)ds,
\end{align*}
where we have used respectively Proposition~\ref{regularityPt} and Lemma~\ref{controlmoments}.(i) to obtain the two last inequalities above, and $\eps$ is any positive constant.

{\bf Step 2.} We now give the proof of point $(ii)$ of the theorem. With the notation of Theorem~$IX.4.21$ of \cite{jacod}, we have $K^N(x,dy):=Nf(x)\mu(\sqrt{N}dy),$ $b'^N(x)=-\alpha x+\int K^N(x,dy)y=-\alpha x$, and $c'^N(x)=\int K^N(x,dy)y^2=\sigma^2f(x).$ Then, an immediate adaptation of Theorem~$IX.4.21$ of \cite{jacod} to our frame implies the result.
\end{proof}

%
%We finally state the 
%\begin{lem}
%\label{convergencetechnique}
%Let $(g_k)_k$ be a sequence of $C_b^1(\r)$ satisfying ${\sup}_k||g_k'||_{\infty}<\infty$, and for all $x\in\r,~g_k(x)\rightarrow 0$ as $k\rightarrow\infty$.
%
%Then for all bounded sequences of real numbers $(x_k)_k,~g_k(x_k)\rightarrow 0$ as $k\rightarrow\infty$.
%\end{lem}
%%The proof of this result is straightforward. 
%\begin{proof} It follows from ${\sup}_k||g_k'||_{\infty}<\infty$ that the sequence $(g_k)$ is uniformly equicontinuous, hence its point-wise convergence to zero implies the uniform convergence to zero on each compact.\end{proof}
%Let $(x_k)_k$ be a bounded sequence. In a first time, we suppose that $(x_k)_k$ converges to some $x\in\r$. Then we have $|g_k(x_k)|\leq ||g_k'||_\infty|x-x_k|+|g_k(x)|$ which converges to zero as $k$ goes to infinity. In the general case, we show that for all subsequence of $(g_k(x_k))_k$, there exists a subsequence of the first one that converges to~0 (the second subsequence has to be chosen such that $x_k$ converges).
%\end{proof}

\section{Proof of Theorem~\ref{convergencesemigroupe2}}\label{convergencetransition}

In this section, we prove Theorem~\ref{convergencesemigroupe2}. We begin by proving some properties of the invariant measure of $\bar{P}_t. $ In what follows we use the total variation distance between two probability measures $ \nu_1 $ and $ \nu_2$ defined by 
$$ \|\nu_1 - \nu_2 \|_{TV} = \frac12  \sup _{ g : \| g \|_\infty \le 1 } | \nu_1 (g) - \nu_2 (g) | .$$
\begin{prop}
\label{ergodic}
If Assumptions~\ref{hyp1f} and~\ref{hyp1control} hold, then the invariant measure $\mesinv$ of $(\bar{P}_t)_t$ exists and is unique. Its density is given, up to multiplication with a constant, by 
$$ p (x) = C \frac{1}{f(x) } \exp \left(-  \frac{2 \alpha }{\sigma^2 } \int_0^x \frac{y}{f(y)} dy \right) .$$
In addition, if Assumption~\ref{hyp2} holds, then for every $0<q<1/2,$ there exists some $\gamma>0$ such that, for all $t\geq 0,$
$$||\bar{P}_t(x,\cdot)-\mesinv||_{TV}\leq \constantetruc \left(1+x^2\right)^qe^{-\gamma t}.$$
\end{prop}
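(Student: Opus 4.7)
The proposition splits into two parts. For the identification of $\lambda$, $\bar X$ is a non-degenerate one-dimensional diffusion, so I would recover the invariant density by solving the stationary Fokker--Planck equation $\bar A^\ast p = 0$. One integration (with vanishing constant, enforced by decay at infinity) reduces this PDE to $\alpha x p(x)+\tfrac12 \sigma^2 (fp)'(x)=0$, whose explicit solution is exactly the announced formula. Integrability on $\mathbb R$ follows from Assumption~\ref{hyp1f}: the bound $f(y)\leq C(1+y^2)$ gives $\int_0^x y/f(y)\,dy\geq c\log(1+x^2)$, so $p$ decays polynomially. For positive Harris recurrence and uniqueness, I would appeal to the classical theory of one-dimensional diffusions via Feller's test: the scale function associated with \eqref{sde} has infinite limits at $\pm\infty$, while the speed measure has finite mass (proportional to $p\,dx$).

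For the exponential rate of convergence, my plan is a Meyn--Tweedie drift-and-minorization argument with the Lyapunov function $V_q(x):=(1+x^2)^q$, where $q\in(0,1/2)$ is fixed. Using Lemma~\ref{expressiongenerateur}, a direct computation gives
\begin{equation*}
\bar A V_q(x) = 2q(1+x^2)^{q-2}\Bigl[-\alpha x^2(1+x^2)+\tfrac12\sigma^2 f(x)\bigl(1+(2q-1)x^2\bigr)\Bigr].
\end{equation*}
The key observation is that $2q-1<0$, so once $|x|$ is large the factor $1+(2q-1)x^2$ is itself negative; combined with $-\alpha x^2(1+x^2)<0$ and the quadratic upper bound on $f$ from Assumption~\ref{hyp1f}, the whole bracket is $\leq -c\,x^4$ for $|x|$ large. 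This yields the Foster--Lyapunov drift condition $\bar A V_q(x)\leq -\gamma' V_q(x)+K_0\uno{|x|\leq R}$ for some $\gamma', K_0, R>0$, with \emph{no} restriction whatsoever on the relative sizes of $\alpha$ and $\sigma^2 L^2$, precisely because $q<1/2$.

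It then remains to establish a minorization of $\bar P_{t_0}$ on the sub-level sets of $V_q$. Under Assumption~\ref{hyp2} the diffusion coefficient $\sigma\sqrt{f}$ is smooth and strictly positive, so the SDE \eqref{sde} is uniformly elliptic on every compact set; by standard parabolic regularity theory and the Stroock--Varadhan support theorem, $\bar P_t(x,\cdot)$ admits a smooth density $p_t(x,y)$ strictly positive on $\mathbb R\times\mathbb R$. Consequently every compact subset of $\mathbb R$ is small for the time-$t_0$ skeleton of $\bar X$, and combining this with the drift condition above, the continuous-time $V_q$-geometric ergodicity theorem (see, e.g., \cite{meyn}) yields $\sup_{|g|\leq V_q}|\bar P_t g(x)-\lambda(g)|\leq C V_q(x) e^{-\gamma t}$. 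Since $V_q\geq 1$, the total variation bound announced in the proposition follows immediately.

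The main technical subtlety, and the reason for the restriction $q<1/2$, is the sharpness of the Lyapunov calculation above. For $q\geq 1/2$ the factor $1+(2q-1)x^2$ is nonnegative, so the drift term $-\alpha x^2(1+x^2)$ must compete alone with a positive diffusion contribution of comparable order, which would require the quantitative condition $\alpha>\tfrac12\sigma^2 L^2$ (or stronger if $q>1/2$). Since this condition is not assumed at this point, the approach is naturally confined to $q<1/2$, which suffices for every subsequent application of Proposition~\ref{ergodic} in the paper.
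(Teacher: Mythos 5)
Your proposal is correct and follows essentially the same route as the paper: the invariant density is identified through the scale/speed (stationary Fokker--Planck) computation with divergence of the scale function, and the exponential bound comes from the Foster--Lyapunov drift condition for $V_q(x)=(1+x^2)^q$ with $q<1/2$ (your bracket computation and the observation that $2q-1<0$ is exactly the paper's reason for the restriction on $q$), combined with the Meyn--Tweedie continuous-time $V$-geometric ergodicity theorem. The only cosmetic difference is how compacts are shown to be small or petite: you invoke local ellipticity and positivity of the transition density, while the paper uses the Feller property and $\mesinv$-irreducibility of a sampled chain together with Theorem~3.4 of \cite{meyn}; both are standard and interchangeable here.
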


\begin{proof}
In a first time, let us prove the positive Harris recurrence of $\bar{X}$ implying the existence and uniqueness of $\mesinv$. According to Example 3.10 of \cite{khasminskii_stochastic_2012} it is sufficient to show that $S(x):=\int_0^xs(y)dy$ goes to $+\infty$ (resp. $-\infty$) as $x$ goes to $+\infty$ (resp. $-\infty$), where
$$s(x):=\exp\left(\frac{2\alpha}{\sigma^2}\int_0^x\frac{v}{f(v)}dv\right).$$
For $x>0,$ and using that $f$ is subquadratic, 
\begin{equation*}
s(x)\geq\exp\left(\constantetruc\int_0^x\frac{2v}{1+v^2}dv\right)=\exp\left(\constantetruc\ln(1+x^2)\right) = (1+x^2)^\constantetruc\geq 1,
\end{equation*}
implying that $S(x)$ goes to $+\infty$ as $x$ goes to $+\infty.$ With the same reasoning, we obtain that $S(x)$ goes to $-\infty$ as $x$ goes to $-\infty.$ Finally, the associated invariant density is given, up to a constant, by 
$$ p(x) = \frac{C}{f(x) s(x) } .$$ 
For the second part of the proof, take $ V(x) = (1 + x^2)^q , $ for some $ q < 1/2,$ then 
$$ V'(x) = 2q x (1+x^2)^{q-1} , V'' (x) = 2 q  (1 + x^2)^{q-2} [ 2 x^2 (q-1) + (1+x^2) ] .$$
As $q < \frac12,$ $V'' (x) < 0 $ for $x^2 $ sufficiently large, say, for  $ |x|\geq K.$  In this case, for $ |x|\geq K,$
$$ \bar A V (x) \le - 2 \alpha q x^2 ( 1+x^2)^{q-1} \le - 2 \alpha q \frac{x^2}{1+x^2 } V(x) \le - 2 q \alpha \frac{K^2}{1+K^2 } V(x)  = - c V(x)  .$$ 
So we obtain that, for suitable constants $c$ and $d$, for any $x\in\r,$
\begin{equation}
\label{lyapunov}
\bar A V(x) \le - c V(x) + d .
\end{equation}

%Using Proposition~15 of \cite{fort_subgeometric_2005}, all compact sets are petite. Then, Theorem~6.1 of \cite{meyn} implies the following bound: introducing for any probability measure $\mu $ the weighted norm
Obviously, for any fixed $ T > 0, $  the sampled chain $(\bar X_{kT})_{k\geq 0}$ is Feller and $\mesinv-$irreducible. The support of $\mesinv$ being $\r$, Theorem 3.4 of \cite{meyn} implies that every compact set is petite for the sampled chain. Then, as \eqref{lyapunov} implies the condition $(CD3)$ of Theorem~6.1 of \cite{meyn}, we have the following bound: introducing for any probability measure $\mu $ the weighted norm

$$\| \mu\|_V := \sup_{ g : |g| \le 1+V } | \mu (g) |, $$
there exist $C,\gamma>0$ such that
$$ \| \bar P_t (x, \cdot ) - \mesinv \|_V \le C (1+V(x)) e^{- \gamma t }.$$
This implies the result, since $||\cdot||_{TV}\leq ||\cdot||_V$.
\end{proof}

Now the proof of Theorem~\ref{convergencesemigroupe2} is straightforward.

\begin{proof}[Proof of Theorem~\ref{convergencesemigroupe2}]
The first part of the theorem has been proved in Proposition~\ref{ergodic}. For the second part, for any $g\in C_b^3(\r),$
\begin{align*}
\left|P_t^Ng(x)-\mesinv (g)\right|\leq &\left|P_t^Ng(x)-\bar{P}_tg(x)\right|+\left|\bar P_tg(x)-\mesinv (g)\right|\\
\leq& \frac{K_t}{\sqrt{N}}(1+x^2)||g||_{3,\infty}+||g||_\infty||\bar P_t(x,\cdot)-\mesinv||_{TV}\\
\leq& ||g||_{3,\infty}\constantetruc\left(\frac{K_t}{\sqrt{N}}(1+x^2)+e^{-\gamma t}(1+x^2)^q\right),
\end{align*}
where we have used Theorem~\ref{convergencesemigroupe1} and Proposition~\ref{ergodic}. Since $ (1+x^2)^q \le 1 + x^2 ,$ $q$ being smaller than $1/2,$ this implies the result. 
\end{proof}

\section{Proof of Theorem~\ref{convergenceZNZ}}\label{convergencepoint}

We prove the result using Theorem~IX.4.15 of \cite{jacod}.

%Let $k\in\n^*$, let us note $Y^N:=(X^N,Z^{N,1},\hdots,Z^{N,k})$ and $\bar Y:=(\bar X,\bar Z^1,\hdots,\bar Z^k).$ Using the notation of Theorem~IX.4.15 of \cite{jacod} with the semimartingales $Y^N$ ($N\in\n^*$) and $\bar Y$ and denoting $e^j$ ($1\leq j\leq k+1$) the $j-$th unit vector, we have:
%\begin{itemize}
%\item $b'^{N,1} (x) = b'^1 (x)= -\alpha x$ and $b'^{N,i}(x)=b'^i(x)=0$ for $2\leq i\leq k+1,$
%\item $\tilde c'^{N,1,1} (x) = c'^{1,1}(x) = \sigma^2f(x^1)$ and $c'^{N,i,j}(x)=c'^{i,j}(x)=0$ for $(i,j)\neq (1,1),$
%\item $g * K^N (x) = f(x^1)\sum_{j=2}^{k+1} \int_\r g(\frac{u}{\sqrt{N}}e^1+e^{j})d\mu(u)+(N-k)\int_\r g(\frac{u}{\sqrt{N}}e^1)d\mu(u),$
%\item $g * K(x)=f(x^1)\sum_{j=1}^k g(e^{j+1}).$
%\end{itemize}

Let $k\in\n^*$, let us note $Y^N:=(X^N,Z^{N,1},\hdots,Z^{N,k})$ and $\bar Y:=(\bar X,\bar Z^1,\hdots,\bar Z^k).$ Using the notation of Theorem~IX.4.15 of \cite{jacod} with the semimartingales $Y^N$ ($N\in\n^*$) and $\bar Y$ and denoting $e^j$ ($0\leq j\leq k$) the $j-$th unit vector, we have:
\begin{itemize}
\item $b'^{N,0} (x) = b'^0 (x)= -\alpha x$ and $b'^{N,i}(x)=b'^i(x)=0$ for $1\leq i\leq k,$
\item $\tilde c'^{N,0,0} (x) = c'^{0,0}(x) = \sigma^2f(x^0)$ and $c'^{N,i,j}(x)=c'^{i,j}(x)=0$ for $(i,j)\neq (0,0),$
\item $g * K^N (x) = f(x^0)\sum_{j=1}^{k} \int_\r g(\frac{u}{\sqrt{N}}e^0+e^{j})d\mu(u)+(N-k)\int_\r g(\frac{u}{\sqrt{N}}e^0)d\mu(u),$
\item $g * K(x)=f(x^0)\sum_{j=1}^k g(e^{j}).$
\end{itemize}

The only condition of Theorem~IX.4.15 that is not straightforward is the convergence of $g*K^N$ to $g*K$ for $g\in C_1(\r^{k+1}).$ The complete definition of $C_1(\r^{k+1})$ is given in VII.2.7 of \cite{jacod}, but here, we just use the fact that $C_1(\r^{k+1})$ is a subspace of $C_b(\r^{k+1})$ containing functions which are zero around zero. This convergence follows from the fact that any $g\in C_1(\r^{k+1})$ can be written as $g(x)=h(x)\uno{|x|>\eps}$ where $h\in C_b(\r^{k+1})$ and $\eps>0.$ This allows to show that, for this kind of function $g,$
%\begin{multline*}
%\left|(N-k)f(x^1)\int_\r g\left(\frac{u}{\sqrt{N}}e^1\right)d\mu(u)\right|\leq (N-k)f(x^1)||h||_\infty\int_\r\uno{|u|>\eps\sqrt{N}}d\mu(u)\\
%\leq f(x^1)C\frac{N-K}{N^2}\leq Cf(x^1)N^{-1},
%\end{multline*}
\begin{multline*}
\left|(N-k)f(x^0)\int_\r g\left(\frac{u}{\sqrt{N}}e^0\right)d\mu(u)\right|\leq (N-k)f(x^0)||h||_\infty\int_\r\uno{|u|>\eps\sqrt{N}}d\mu(u)\\
\leq f(x^0)C\frac{N-K}{N^2}\leq Cf(x^0)N^{-1},
\end{multline*}
where the second inequality follows from the fact that we assume that $\mu$ is a probability measure having {a finite} fourth moment.

Theorem~IX.4.15 of \cite{jacod} implies that for all $k\geq 1$, $(Z^{N,1},...,Z^{N,k})$ converges to $(\bar Z^1,...,\bar Z^k)$ in distribution in $D(\r_+,\r^k)$.

This implies the weaker convergence in $D(\r_+,\r)^k$ for any $k\in\mathbb{N}^*.$ Then, the convergence in $D(\r_+,\r)^{\mathbb{N}^*}$ is classical (see e.g. Theorem~3.29 of \cite{kallenberg_foundations_1997}).
%This implies the convergence in $D(\r_+,\r)^k.$ In particular, for every $i\geq 1,$ the sequence $(Z^{N,i})_N$ is tight on $D(\r_+,\r)$, whence the sequence $((Z^{N,i})_{i\geq 1})_N$ is tight on $D(\r_+,\r)^{\n^*}$. Indeed let $d_S$ be the Skorokhod metric and $d(x,y):=\sum_{i=1}^{\infty}2^{-i}d_S(x_i,y_i)$. It is classical that the topology of $d$ is the product topology on $D(\r_+,\r)^{\n^*}$. By hypothesis, for every $i\geq 1,\eps>0,$ there exists a compact set $K^{i}_\eps$ such that for all $N,$ $\pro{Z^{N,i}\not\in K^i_\eps}\leq \eps$. Then, by Tykhonoff's theorem, $K_\eps:=\prod_{i=1}^{+\infty}K^i_{\eps/2^i}$ is a compact set of $D(\r_+,\r)^{\n^*}$ and it satisfies, for all $N,\pro{(Z^{N,i})_{i\geq 1}\not\in K_\eps}\leq \eps.$

%So the sequence $((Z^{N,i})_{i\geq 1})_N$ is tight on $D(\r_+,\r)^{\n^*}.$ Then, the projections of any limit in distribution of this sequence on the $k$ first coordinates have the same distribution as $(\bar Z^1,...,\bar Z^k)$, and this implies that the law $(Z^i)_{i\geq 1}$ is the only limit in distribution of $(Z^{N,i})_{i\geq 1}$.

%and, by standard arguments, the convergence of $(Z^{N,i})_{i\geq 1}$ to $(\bar Z^i)_{i\geq 1}$ in $D(\r_+,\r)^{\n^*}.$

\section{Appendix}\label{}

\subsection{Extended generators}\label{extendedgenerator}

%In this subsection, we define precisely the notion of generators we use and we prove the results needed to prove formula~\eqref{formulegenerateur}.

There are different definitions of infinitesimal generators in the literature. The aim of this subsection is to define precisely the notion of generator we use in this paper. Moreover we establish some properties of these generators and prove formula~\eqref{TBbis}. In the general theory of semigroups, one defines the generators on some Banach space. In the frame of semigroups related to Markov processes, one generally considers $(C_b(\r),||\cdot||_\infty)$. In this context, the generator $A$ of a semigroup $(P_t)_t$ is defined on the set of functions $\mathcal{D}(A)=\{g\in C_b(\r):\exists h\in C_b(\r),||\frac{1}{t}(P_tg-g)-h||_\infty{\longrightarrow}\,0\textrm{ as }t\rightarrow 0\}$. Then one denotes the previous function $h$ as $Ag$. In general, we can only guarantee that $\mathcal{D}(A)$ contains the functions that have a compact support, but to prove Proposition~\ref{generatorsemigroup}, we need to apply the generators of the processes $(X_t^N)_t$ and $(\bar{X}_t)_t$ to functions of the type $\bar{P}_sg$, and we cannot guarantee that $\bar{P}_sg$ has compact support even if we assume $g$ to be in $C_c^\infty(\r)$.

This is why we consider extended generators (see for instance~\cite{meyn} or~\cite{davis}). These extended generators are  defined by the point-wise convergence on~$\r$ instead of the uniform convergence. Moreover, they verify the fundamental martingale property, which allows us to define the generator on $C_b^n(\r)$ for suitable $n\in\n^*$ and to prove that some properties of the classical theory of semigroups still hold for this larger class of functions.

Let $(X_t)_t$ be a Markov process taking values in~$\r$. 
We set
\[
\D(P)=
\{g:\R\to\R,\; \mbox{measurable, s.t.}\;  \forall x\in\r,\; \forall t\geq 0,\;\;  \E_x|g(X_t)|<\infty\}.
\]
For  $g\in\D(P)$, $x\in\R,$ $t\geq 0,$  we define $P_tg(x)=\espcc{x}{g(X_t)}.$ 

{
\begin{defi}
\label{definitiongenerateur}
We define $\mathcal{D}'(A)$ to be the set of $g\in\D(P)$ for which there exists a measurable function $Ag:\R\to\R,$ 
 such that $Ag\in\D(P),$ $ t \mapsto P_t Ag (x) $ is continuous in $0,$ and $\forall x\in\R,$ $\forall t\geq 0,$
\begin{description}
\item(i) $\E_x{g(X_t)}-g(x)=\E_x{\int_0^t Ag(X_s)ds};$
\item (ii) $\E_x{\int_0^t|A(g(X_s))|ds}<\infty.$
\end{description}
\end{defi}

\begin{rem}
Using Fubini's theorem and $(ii)$ we can rewrite $(i)$ in the following form:
\begin{equation}\label{fondamental}
P_tg(x)-g(x)=\int_0^tP_sAg(x)ds.
\end{equation}
Then \eqref {fondamental} implies immediately that if $g\in\D'(A),$ then
\begin{equation}
\label{derivezero}
\lim_{t\to 0}\frac{1}{t}(P_tg(x)-g(x))=Ag(x).
\end{equation}
Note also that it follows  from the Markov property and the definition of $ Ag$  that the process 
$ g(X_t)-g(X_0)-\int_0^t Ag(X_s) ds$
 is a $\mathbb{P}_x$-martingale w.r.t. to the filtration generated by $(X_t)_t$.
\end{rem}

The following result is classical and stated without proof. It is a straightforward consequence of \eqref{fondamental} and \eqref{derivezero}. 
\begin{prop}\label{deriveegenerateur}
Suppose that $A$ is the extended generator of the semigroup $(P_t)_t,$ $g\in\D'(A), $ 
%$Ag\in\D(P),$ 
and the map $s\to P_sAg(x)$ is continuous on $\R_+$ for some $x\in\R. $  
Then 
$$\frac d{dt}P_tg(x)=P_tAg(x).$$
Moreover, if for all $t\geq 0,$ $ P_tg\in\D'(A),$ then $\frac d{dt}P_tg(x)=AP_tg(x)=P_tAg(x).$
\end{prop}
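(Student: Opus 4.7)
My plan is to derive both identities directly from the fundamental formula \eqref{fondamental} characterizing extended generators, combined with the Markov semigroup property $P_{t+h} = P_t P_h$.

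For the first claim $\frac{d}{dt} P_t g(x) = P_t Ag(x)$, I would start from \eqref{fondamental} applied to $g \in \D'(A)$, namely
$$ P_t g(x) - g(x) = \int_0^t P_s Ag(x)\, ds . $$
The hypothesis that $s \mapsto P_s Ag(x)$ is continuous on $\R_+$ is precisely the regularity needed for the fundamental theorem of calculus: the right-hand side is then a continuously differentiable function of $t$, with derivative $P_t Ag(x)$. This immediately gives the first identity.

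For the second claim, I would additionally use the assumption $P_t g \in \D'(A)$ for every $t \geq 0$. Then \eqref{derivezero} applied to $P_t g$ gives
$$ \lim_{h \to 0} \frac{1}{h}\bigl( P_h(P_t g)(x) - P_t g(x) \bigr) = A(P_t g)(x) . $$
By the semigroup property $P_h \circ P_t = P_{t+h}$, the left-hand side is precisely the right derivative of $s \mapsto P_s g(x)$ at $s = t$, which we already identified with $P_t Ag(x)$ via the first part. Comparing the two expressions yields $A P_t g(x) = P_t Ag(x)$, completing the proof.

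The computation is essentially formal once \eqref{fondamental} is available, so there is no serious obstacle; the only delicate point is that the definition of $\D'(A)$ in this paper requires continuity of $t \mapsto P_t Ag(x)$ only at $t = 0$, so the stronger continuity hypothesis on $\R_+$ cannot be dropped when invoking the fundamental theorem of calculus. I would also note, as a small remark, that the use of the semigroup identity $P_h \circ P_t = P_{t+h}$ is the Chapman--Kolmogorov relation, which is automatic in the Markov setting considered here.
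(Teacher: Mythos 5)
Your argument is correct and matches the paper's intended route: the paper states this proposition without proof, noting it is a straightforward consequence of \eqref{fondamental} and \eqref{derivezero}, which is exactly what you use (the fundamental theorem of calculus on \eqref{fondamental} for the first identity, and \eqref{derivezero} applied to $P_tg$ together with the semigroup property for the second). Your closing remark correctly identifies why the continuity hypothesis on all of $\R_+$ is needed rather than just at $0$.
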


%\begin{proof}
%The first assertion follows immediately from \eqref{fondamental} and the continuity on $\R$ of $s\to P_sAg(x)$, hence for $h\geq 0,$
%$$\lim_{h\to 0}\frac{1}{h}(P_{t+h}g(x)-P_tg(x))=\lim_{h\to 0}\frac 1h\int_t^{t+h}P_sAg(x)ds=P_tAg(x),$$
%and
%$$\lim_{h\to 0}\frac{1}{h}(P_{t}g(x)-P_{t-h}g(x))=\lim_{h\to 0}\frac 1h\int_{t-h}^tP_sAg(x)ds=P_tAg(x).$$
%The second assertion is a consequence of  \eqref{derivezero}.
%\end{proof}

In what follows, we give some sufficient conditions to verify the continuity and the derivability  of the map $s\mapsto P_sh(x).$ These conditions are not intended to be optimal, they are stated such that it is easy to check them both for $ X^N $ and $ \bar X . $ 
\begin{prop}\label{practical}
Let $(X_t)_t$ be a Markov process with semigroup $(P_t)_t$ and extended generator $A.$  
\begin{enumerate}
\item Let $h\in \D(P),$ $x\in\R.$ Suppose that
\begin{itemize} 
\item [(i)] the map $t\to X_t$ is continuous  in $\LL^2,$ i.e.
$ \lim_{|t-s|\to 0} \E_x|X_s-X_t|^2= 0 ;$

\item  [(ii)]  for all $T>0$, ${\sup}_{0\leq t\leq T}\E_x{(|X_t|^4)}<+\infty;$
\item [(iii)] there exists $C>0,$ such that $\forall x,  y \in \R,\; $ $|h(x)-h(y)|\leq C(1+x^2+y^2)|x-y|.$\\
\end{itemize}
Then the map $s\mapsto P_sh(x)$ is continuous on $\R_+.$
\item Suppose moreover that $(i)$, $(ii)$ and $(iii)'$ are satisfied with
\begin{itemize}
\item [(iii)'] $g\in \D'(A)$ such that
 for some $C>0,$ 
and for all $x, y\in \R,$ we have that
$|{A}g(x)-{A}g(y)|\leq C(1+x^2+y^2)|x-y|.$
\end{itemize}
Then the map $s\to P_sg(x)$ is differentiable on $\R_+,$ and $\frac d{dt}P_tg(x)=P_tAg(x).$

%$s\to P_sAg(x)$ is continuous on $\R_+$, and as a consequence,  $\frac d{dt}P_tg(x)=P_tAg(x).$

\end{enumerate}

\end{prop}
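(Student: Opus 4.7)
The strategy is to prove $(1)$ by a direct Cauchy--Schwarz estimate, and then to deduce $(2)$ by applying $(1)$ to the function $Ag,$ combined with the integral representation built into Definition~\ref{definitiongenerateur}.

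For $(1),$ fix $x\in\R$ and let $s,t\geq 0.$ Assumption $(iii)$ gives the pointwise bound $|h(X_t)-h(X_s)|\leq C(1+X_t^2+X_s^2)|X_t-X_s|.$ Taking $\E_x$ on both sides and applying the Cauchy--Schwarz inequality leads to
\[
|P_th(x)-P_sh(x)|\leq \E_x|h(X_t)-h(X_s)|\leq C\,\bigl(\E_x[(1+X_t^2+X_s^2)^2]\bigr)^{1/2}\bigl(\E_x|X_t-X_s|^2\bigr)^{1/2}.
\]
Expanding the square inside the first expectation, assumption $(ii)$ ensures that the first factor stays uniformly bounded when $s,t$ vary in any fixed compact interval $[0,T];$ assumption $(i)$ then ensures that the second factor vanishes as $|t-s|\to 0.$ This gives the continuity of $s\mapsto P_sh(x)$ on $\R_+.$

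For $(2),$ the definition of $\D'(A)$ provides that $Ag\in\D(P)$ and, via Fubini's theorem applied to part $(i)$ of Definition~\ref{definitiongenerateur} (with integrability ensured by part $(ii)$), we have the integral representation \eqref{fondamental}:
\[
P_tg(x)-g(x)=\int_0^tP_sAg(x)\,ds.
\]
Since $Ag$ satisfies the quadratic Lipschitz-type bound $(iii)',$ the first part of the proposition applied to $h:=Ag$ yields the continuity of $s\mapsto P_sAg(x)$ on $\R_+.$ The fundamental theorem of calculus for continuous integrands then gives the differentiability of $t\mapsto P_tg(x)$ and the identity $\frac{d}{dt}P_tg(x)=P_tAg(x).$

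The argument is essentially bookkeeping: the key observation is that the quadratic growth factor appearing in $(iii)$ and $(iii)'$ is precisely what can be absorbed by the fourth-moment bound $(ii)$ after Cauchy--Schwarz, so no genuine obstacle arises. The only subtle point worth explicitly mentioning is that applying part $(1)$ to $h=Ag$ requires $Ag\in\D(P),$ which however is guaranteed by the very definition of the extended domain $\D'(A).$
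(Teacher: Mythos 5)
Your proof is correct and follows essentially the same route as the paper: part (1) is the identical Cauchy--Schwarz estimate absorbing the quadratic growth into the fourth-moment bound, and part (2) applies part (1) to $h=Ag$ and then uses the integral representation \eqref{fondamental} plus the fundamental theorem of calculus, which is exactly the content of Proposition~\ref{deriveegenerateur} that the paper cites at this point.
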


\begin{proof} 
The proof of  point 1.  follows from the following chain of inequalities
\begin{multline*}
|P_th(x)-P_sh(x)|\leq \E_x |h(X_t)-h(X_s)|\leq C \E_x\left[(1+X_t^2+X_s^2)|X_t-X_s|\right ]\leq 
\\ C [\E_x(1+X_t^4+X_s^4)]^{1/2}[\E_x|X_t-X_s|^2]^{1/2}\leq C \sup_{u\leq s\vee t}[\E_x X_u^4]^{1/2}\|X_s-X_t\|_2\underset{{|t-s|\to 0}}{{\longrightarrow 0}} .
\end{multline*}
The second assertion of the proof follows from point 1. and Proposition \ref{deriveegenerateur}, observing that $ h := Ag$ satisfies point (iii). 

\end{proof}

\subsection{Proof of \eqref{TBbis}}
In this section, we  first collect some useful results about the extended generators $A^N $ of $ X^N $ and $ \bar A $ of $ \bar X. $ Then we give the proof of \eqref{TBbis}. We start with the following result.
\begin{prop}\label{derivegenerateur}
1. For all  $g\in C_b^3(\r)$, for all $x,y\in\r,$
$$|\bar{A}g(x)-\bar{A}g(y)|\leq \constantetruc \|g\|_{3, \infty } (1+x^2+y^2)|x-y|\;   \mbox{ and } |\bar{A}g(x)|\leq \constantetruc||g||_{2, \infty}(1+x^2) .$$
In particular,  for any $g \in C^3_b, $ the map $t \to \bar P_tg(x)$ is differentiable on $\R_+,$ and $\frac d{dt}\bar P_tg(x)=\bar P_t \bar Ag(x) = \bar A \bar P_t g ( x) .$\\
2. For all $ g \in C_b^2 ( \r), $ for all $ x, y \in \r,$  
$$  |A^Ng(x)-A^Ng(y)|\leq \constantetruc \|g\|_{2, \infty } (1+x^2+y^2)|x-y| \mbox{ and } \;  |A^Ng(x)|\leq \constantetruc||g||_{1, \infty}(1+x^2).$$
In particular,  for any $g \in C^2_b, $ the map $ t \to P^N_t g( x) $ is differentiable on $\R_+,$ and $\frac d{dt} P^N_tg(x)= P^N_t  A^Ng(x) .$ 
\end{prop}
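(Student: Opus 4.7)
The overall strategy is: establish the pointwise growth and Lipschitz estimates on $\bar A g$ and $A^N g$ directly from the explicit expressions in Lemma~\ref{expressiongenerateur}, and then feed these into Proposition~\ref{practical} part~2 to obtain the differentiability of the semigroups. The three hypotheses of Proposition~\ref{practical} will be supplied as follows: the $\LL^2$-continuity of the paths (hypothesis~(i)) comes from Lemma~\ref{controlmoments}(v); the locally uniform fourth-moment bound (hypothesis~(ii)) comes from Lemma~\ref{controlmoments}(iv); and hypothesis~(iii)' is precisely the Lipschitz bound I will prove. The additional equality $\bar A\bar P_tg = \bar P_t\bar A g$ in Part~1 then follows from the last line of Proposition~\ref{deriveegenerateur}, provided $\bar P_tg\in\mathcal{D}'(\bar A)$, which is ensured by Proposition~\ref{regularityPt} (giving $\bar P_tg\in C^3_b(\r)$) together with Lemma~\ref{expressiongenerateur}.

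For Part~1, I start from $\bar A g(x)=-\alpha xg'(x)+\tfrac12\sigma^2f(x)g''(x)$. The growth bound is immediate from $|x|\le 1+x^2$ and $f(x)\le C(1+x^2)$, the latter coming from Assumption~\ref{hyp1f}. For the Lipschitz bound, I split the difference $\bar Ag(x)-\bar Ag(y)$ into drift and diffusion contributions; the drift is handled by writing $xg'(x)-yg'(y)=x(g'(x)-g'(y))+(x-y)g'(y)$, and the diffusion by
\[
f(x)g''(x)-f(y)g''(y)=f(x)\bigl(g''(x)-g''(y)\bigr)+g''(y)\bigl(f(x)-f(y)\bigr),
\]
together with the local Lipschitz estimate $|f(x)-f(y)|\le C(1+|x|+|y|)|x-y|$, which one derives from Assumption~\ref{hyp1f} via the factorization $f=(\sqrt f)^2$. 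Combining the pieces yields the advertised bound $C\|g\|_{3,\infty}(1+x^2+y^2)|x-y|$.

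For Part~2, I use $A^Ng(x)=-\alpha xg'(x)+Nf(x)\int[g(x+u/\sqrt N)-g(x)]d\mu(u)$. The growth bound follows from the elementary estimate $|g(x+u/\sqrt N)-g(x)|\le \|g'\|_\infty|u|/\sqrt N$, which produces the factor $\sqrt N\int|u|d\mu(u)$ that is finite under Assumption~\ref{hyp1control} and is absorbed into the constant (which may depend on $N$, since $N$ is fixed throughout). The Lipschitz bound is the most delicate point of the proof: I exploit the centering of $\mu$ to rewrite the nonlocal term as
\[
Nf(x)\int\!\bigl[g(x+u/\sqrt N)-g(x)-\tfrac{u}{\sqrt N}g'(x)\bigr]d\mu(u),
\]
differentiate in $x$, bound the integrand using $|g'(x+u/\sqrt N)-g'(x)-\tfrac{u}{\sqrt N}g''(x)|\le 2\|g''\|_\infty|u|/\sqrt N$, and finally combine with the Lipschitz estimate for $f$ already obtained in Part~1. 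The main obstacle is precisely this Lipschitz estimate for the nonlocal part of $A^N$: the centering of $\mu$ is essential to keep the bound finite, and one must be careful to arrange the Taylor expansions in the right order so that each term is controlled by $\|g\|_{2,\infty}$ and produces a factor $(1+x^2+y^2)|x-y|$.
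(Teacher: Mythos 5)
Your proof is correct and takes essentially the same route as the paper: explicit generator formulas from Lemma~\ref{expressiongenerateur}, the pointwise growth and local-Lipschitz bounds, then Proposition~\ref{practical} fed by the moment estimates of Lemma~\ref{controlmoments}, and finally Proposition~\ref{deriveegenerateur} combined with Proposition~\ref{regularityPt} to get $\bar A\bar P_tg=\bar P_t\bar Ag$. The only nuance, that your constants in Part~2 depend on $N$ (unavoidable with only $\|g\|_{2,\infty}$ available), is harmless since $N$ is fixed wherever these bounds are used (compare Lemma~\ref{ANprime}).
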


\begin{proof}
The result follows from Proposition \ref{practical} together with Lemma \ref{controlmoments} and Lemma \ref{expressiongenerateur}. Finally, to show that $\bar P_t \bar Ag(x) = \bar A \bar P_t g ( x) ,$ we use Proposition \ref{deriveegenerateur} and Proposition \ref{regularityPt}. 
\end{proof}}

We are now able to give the proof of the main result of this section. This result is a Trotter-Kato like formula that allows to obtain a control of the difference between the semigroups $\bar P$ and $P^N$, provided we dispose already of a control of the distance between their generators. It is an adaptation of Lemma~1.6.2 from \cite{ethier} to the notion of extended generators.

\begin{prop}
\label{generatorsemigroup}
Grant Assumptions~\ref{hyp1f},~\ref{hyp1control} and~\ref{hyp2}. Let $\bar A$ and $A^N$ be the extended generators of respectively $\bar P$ and $P^N.$

Then the following equality holds for each $g\in C^3_b(\r)$, $x\in\r$ and $t\in\r_+ .$
\begin{equation}
\label{TB}
\left(\bar{P}_t-P^N_t\right)g(x)=\int_0^tP_{t-s}^N\left(\bar{A}-A^N\right)\bar{P}_sg(x)ds.
\end{equation}
\end{prop}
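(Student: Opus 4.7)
The plan is to adapt the classical argument underlying Lemma~1.6.2 of \cite{ethier}, replacing strong semigroup differentiability on a Banach space with the pointwise differentiability supplied by Propositions~\ref{regularityPt},~\ref{practical} and~\ref{derivegenerateur}. Fix $g\in C_b^3(\r)$, $x\in\r$, $t>0$ and set
$$\Phi(s):=P^N_{t-s}\bigl(\bar P_s g\bigr)(x),\qquad s\in[0,t].$$
Then $\Phi(0)=P^N_tg(x)$ and $\Phi(t)=\bar P_tg(x)$, so the statement reduces to showing that $\Phi$ is continuously differentiable on $[0,t]$ with $\Phi'(s)=P^N_{t-s}(\bar A-A^N)\bar P_sg(x)$, after which the fundamental theorem of calculus yields \eqref{TB}.

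To compute $\Phi'(s)$, I would write the increment
$$\Phi(s+\eps)-\Phi(s)=P^N_{t-s-\eps}\bigl[\bar P_{s+\eps}g-\bar P_sg\bigr](x)+\bigl[P^N_{t-s-\eps}-P^N_{t-s}\bigr]\bar P_sg(x).$$
Since $\bar P_ug\in C_b^3(\r)\subset C_b^2(\r)$ for every $u\ge 0$ by Proposition~\ref{regularityPt}, both $\bar A\bar P_ug$ and $A^N\bar P_sg$ are well defined through Lemma~\ref{expressiongenerateur}, and Proposition~\ref{derivegenerateur} gives
$$\bar P_{s+\eps}g-\bar P_sg=\int_s^{s+\eps}\bar A\bar P_ug\,du,\qquad \bigl[P^N_{t-s-\eps}-P^N_{t-s}\bigr]\bar P_sg=-\int_{t-s-\eps}^{t-s} P^N_u A^N\bar P_sg\,du.$$
An application of Fubini, legitimate because $|\bar A\bar P_ug(y)|\le C\|g\|_{3,\infty}(1+y^2)$ uniformly in $u\in[0,t]$ and $X^N$ has uniformly bounded second moments by Lemma~\ref{controlmoments}, turns the first term into $\int_s^{s+\eps}P^N_{t-s-\eps}\bar A\bar P_ug(x)\,du$. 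Dividing by $\eps$ and letting $\eps\to 0$ then gives $\Phi'(s)=P^N_{t-s}\bar A\bar P_sg(x)-P^N_{t-s}A^N\bar P_sg(x)$.

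\textbf{The main obstacle} lies in justifying these two limits. The continuity of $v\mapsto P^N_vA^N\bar P_sg(x)$ at $v=t-s$ follows directly from Proposition~\ref{practical}, since Proposition~\ref{derivegenerateur} yields the required Lipschitz-type bound $|A^N\bar P_sg(y_1)-A^N\bar P_sg(y_2)|\le C\|\bar P_sg\|_{2,\infty}(1+y_1^2+y_2^2)|y_1-y_2|$. The delicate point is the convergence $\frac{1}{\eps}\int_s^{s+\eps}P^N_{t-s-\eps}\bar A\bar P_ug(x)\,du\to P^N_{t-s}\bar A\bar P_sg(x)$, which requires joint continuity of $(u,v)\mapsto P^N_v\bar A\bar P_ug(x)$ at $(s,t-s)$. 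I would obtain continuity in $v$, uniformly in $u$ on compacts, from Proposition~\ref{practical} applied to $\bar A\bar P_ug$ with the Lipschitz constant $C\|\bar P_ug\|_{3,\infty}$ uniformly bounded on $[0,t]$ by Proposition~\ref{regularityPt}; continuity in $u$ follows by a dominated convergence argument, using the explicit formula for $\bar A$, the pointwise continuity of $u\mapsto (\bar P_ug)^{(i)}(y)$ for $i\in\{0,1,2\}$ from Proposition~\ref{regularityPt}, the quadratic domination $|\bar A\bar P_ug(y)|\le C(1+y^2)$, and the uniform moment bound from Lemma~\ref{controlmoments}.(iv). The same analysis shows that $\Phi'$ is continuous on $[0,t]$, so integrating from $0$ to $t$ produces the identity \eqref{TB}.
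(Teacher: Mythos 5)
Your proof is correct and rests on the same underlying idea as the paper's: interpolate via $s\mapsto P^N_{t-s}\bar P_sg(x)$, identify its derivative as $P^N_{t-s}(\bar A-A^N)\bar P_sg(x)$, and integrate. The execution differs in a way worth noting. The paper treats the two-variable map $\Phi(v_1,v_2)=P^N_{v_1}\bar P_{v_2}g(x)$, differentiates in each variable separately (Proposition~\ref{derivegenerateur} plus a dominated-convergence step), composes with $s\mapsto(t-s,s)$, and then proves continuity of the derivative by the sequential decomposition \eqref{termenumero1}--\eqref{termenumero2}; the delicate term \eqref{termenumero2} mixes the semigroup parameter and the test function simultaneously along $X^N_{t-s_k}$, which forces a locally uniform convergence argument for $(\bar A-A^N)g_k$ and hence the equicontinuity Lemma~\ref{ANprime}. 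You instead expand the increment of $u(s)$ directly, write each piece as a time integral of a generator via the extended-generator identity \eqref{fondamental} (Proposition~\ref{derivegenerateur}), exchange $\espccc{x}{N}{\cdot}$ with the time integral by Fubini, and pass to the limit using continuity in the semigroup parameter uniformly in the test-function parameter (Proposition~\ref{practical}, with constants controlled uniformly through \eqref{eq:qt2}) together with continuity in the function parameter by dominated convergence from the pointwise continuity of $s\mapsto(\bar P_sg)^{(i)}(x)$, $i\in\{0,1,2\}$, in Proposition~\ref{regularityPt}. Decoupling the two parameters this way buys you a cleaner argument that avoids both the chain-rule step and Lemma~\ref{ANprime}; the one point you gloss over is the continuity of $\Phi'$ itself, where the term $s\mapsto P^N_{t-s}A^N\bar P_sg(x)$ again varies in both slots, but the same uniform-in-one-variable/dominated-convergence scheme (with the $\mu$-integral in $A^N$ handled by $\|(\bar P_sg)'\|_\infty\leq Q_t\|g\|_{3,\infty}$ for fixed $N$) does close it, so this is a matter of writing it out rather than a gap.
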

\begin{proof}

%To begin with, let us emphasize the fact that hypothesis $(i)$ implies
%\begin{equation}
%\label{hyp(i)}
%\underset{0\leq t\leq T}{\sup}\espcc{x}{(\bar{Y}_t)^2}\leq \constantetruc_T(1+x^2)\textrm{ and }\underset{0\leq t\leq T}{\sup}\espccc{x}{N}{(Y^N_t)^2}\leq \constantetruc_T(1+x^2),
%\end{equation}
%since
%$$\espcc{x}{(\bar Y_t)^2}\leq\espcc{x}{(\bar Y_t)^4}^{1/2}\leq \constantetruc_T\sqrt{1+x^4}\leq\constantetruc_T\ll(1+x^2\rr).$$
%
We fix $t\geq 0,N\in\n^*,g\in C^3_b(\r),x\in\r$ in the rest of the proof. Introduce for $ 0 \le s \le t $ the function $u(s)=P_{t-s}^N\bar{P}_sg(x)$.

One can note that $u = \Phi \circ \Psi$ with $\Phi:\R^2\to\R;$ $\Phi(v_1,v_2) = P^N_{v_1}\bar P_{v_2}g(x)$ and $\Psi:\R\to\R^2;$ $\Psi(s) = (t-s,s)$. Let us show that $\Phi$ is differentiable w.r.t. to both variables $v_1$ and $v_2$. Indeed, for $v_1$ it is a consequence of the fact  that $h=\bar P_{v_2}g\in C^3_b(\mathbb{R})$ by Proposition \ref{regularityPt}
 and Proposition~\ref{derivegenerateur}, from which we know  that if $h\in C^2_b,$ then $v_1\to P^N_{v_1}h(x)$ is differentiable and 
 $$\frac {\partial }{dv_1} \Phi(v_1,v_2)=\frac d{dv_1}P^N_{v_1}h(x)=P^N_{v_1}A^Nh(x).$$  
  To show the differentiability of $\Phi$ with respect to  $v_2$, let us write
  $\Phi(v_1,v_2) = \espcc{x}{\bar P_{v_2}g(X^N_{v_1})}.$  From Proposition~\ref{derivegenerateur}, we know that since $g\in C^3_b,$ $v_2\mapsto \bar P_{v_2}g(X^N_{v_1})$ is a.s. differentiable with  derivative 
 $$\frac d{dv_2} \bar P_{v_2}g(X^N_{v_1})=\bar A\bar P_{v_2}g(X^N_{v_1})=\bar P_{v_2}\bar Ag(X^N_{v_1})=\E_{X_{v_1}^N}(\bar A g)(\bar X_{v_2}).$$
 Moreover, 
 $|\bar Ag(x)|\leq \constantetruc||g||_{2,\infty}(1+x^2)$ by Proposition \ref{derivegenerateur}. Now, using Lemma~\ref{controlmoments}.$(ii)$ we see that 
$$\sup_{v_2\leq T} \left|\frac d{dv_2} \bar P_{v_2}g(X^N_{v_1})\right|\leq \E_{X^N_{v_1}}\left[\sup_{v_2\leq T}|(\bar A g)(\bar X_{v_2})|\right]\leq C_T  \left(1+(X^N_{v_1})^2\right).$$ 
By Lemma~\ref{controlmoments}.$(iii),$ we see that the last bound is integrable, hence by dominated convergence, $v_2\mapsto \Phi(v_1,v_2) $ is differentiable with derivative 
$$\frac {\partial }{dv_2} \Phi(v_1,v_2)=P^N_{v_1}\bar A\bar P_{v_2}g(x)=P^N_{v_1}\bar P_{v_2}\bar A g(x).$$  
 As a consequence, $u$ is differentiable on $\R_+ ,$ and we have 
\begin{align*}
u'(s)=&-\frac{\partial}{\partial v_1} \Phi(t-s,s) + \frac {\partial}{\partial v_2} \Phi(t-s,s)\\
=&-P^N_{t-s}A^N\bar{P}_sg(x)+P^N_{t-s}\bar{P}_s\bar{A}g(x)\\
=&P^N_{t-s}\left(\bar{A}-A^N\right)\bar{P}_sg(x).
\end{align*}
Now we show that $u'$ is continuous. Indeed, if it is the case, then we will have
$$u(t)-u(0)=\int_0^tu'(s)ds,$$
which is exactly the assertion. 

In order to prove the continuity of $u'$, we consider a sequence $(s_k)_k$ that converges to some $s\in[0,t]$, and we write
\begin{align}
\left|P_{t-s}^N\left(\bar{A}-A^N\right)\bar{P}_sg(x)-P_{t-s_k}^N\left(\bar{A}-A^N\right)\bar{P}_{s_k}g(x)\right|\leq& \left|\left(P_{t-s}^N-P_{t-s_k}^N\right)\left(\bar{A}-A^N\right)g_s(x)\right|\label{termenumero1}\\&+\left|P_{t-s_k}^N\left(\bar{A}-A^N\right)\left(\bar{P}_s-\bar{P}_{s_k}\right)g(x)\right|,\label{termenumero2}
\end{align}
where $g_s=\bar{P}_sg\in C_b^3(\r)$.

To show that the term~\eqref{termenumero1} vanishes when $k$ goes to infinity, denote $h_s(x)=(\bar A-A^N)g_s(x).$ Using Proposition \ref{derivegenerateur} and the fact that $g_s\in C_b^3(\r)$,
we have
$$|h_s(x)-h_s(y)|\leq C (1+x^2+y^2)|x-y|.$$
Proposition \ref{practical} applied to $h_s $ and to $ P^N$ implies that $u\to P_u^Nh_s(x)$ is continuous. As a consequence the term~\eqref{termenumero1} vanishes as $ k \to \infty .$

To finish the proof, we need to show that the term~\eqref{termenumero2} vanishes.
Denote $g_k=\left(\bar{P}_s-\bar{P}_{s_k}\right)g .$  We have to show that
$$\E_x \left[  \left(\bar{A}-A^N\right)g_k(X^N_{t-s_k})\right] \to 0, \;  \mbox {when}\; k\to \infty.$$
In what follows we will in fact show that 
\begin{equation}\label{eq:exp}
\E_x \left[ \bar{A}g_k(X^N_{t-s_k})\right]  \to 0 \mbox{ and } \E_x \left[ {A^N}g_k(X^N_{t-s_k})\right]  \to 0 ,  \;  \mbox {when}\; k\to \infty.
\end{equation} 
To begin with, using Proposition~\ref{regularityPt}, the functions $g_k$ belong to $C^3_b(\mathbb{R})$, and for any $i\in\{0,1,2\},$ for all $y\in\mathbb{R},$ $g_k^{(i)}(y)$ vanishes as $k$ goes to infinity.
Using again Proposition~\ref{regularityPt}, we see that  for all $i\in\{0,1,2,3\},$ $||g_k^{(i)}||_\infty$ is uniformly bounded in~$k$.
It follows that each sequence $(g_k^{(i)})_k$, $i\in\{0,1,2\},$ is uniformly equicontinuous and thus converges to zero uniformly on each compact interval. 

{We next show that this implies that also the sequences $ ( A^N g_k)_k $ and $ ( \bar A g_k )_k $ converge to zero uniformly on each compact interval. For $( \bar A g_k)_k , $ this is immediate, since $ \bar A $ is a local operator having continuous coefficients. For $ (A^N g_k )_k , $ it follows from the fact that $ A^N g_k ( x) \to 0 $ as $ k \to \infty $ for each fixed $x$ and the fact that by Lemma \ref{ANprime} given below, this sequence is uniformly (in $k$, for fixed $N$) equicontinuous on each compact.

We are now able to conclude. The sequence $(X^N_{t-s_k})_k$ is almost surely bounded by $\underset{0\leq r\leq t}{\sup}|X^N_r|$ which is finite almost surely by Lemma~\ref{controlmoments}.$(iii)$. Hence, almost surely as $ k \to \infty, $ $\bar Ag_k(X^N_{t-s_k})\to 0$ and $ A^Ng_k(X^N_{t-s_k})\to 0. $  

We now apply dominated convergence to prove \eqref{eq:exp}. Using that by Proposition \ref{derivegenerateur},
 for all $g\in C_b^3(\r)$ and $x\in\r$, 
$$|\bar{A}g(x)|\leq \constantetruc||g||_{2,\infty}(1+x^2), $$
% \mbox{ and } |A^Ng(x)|\leq \constantetruc||g||_{2,\infty}(1+x^2),$$
we can bound the expression in the first expectation by
$$C||g_k||_{2 , \infty }\left(1+(\underset{0\leq r\leq t}{\sup}|X^N_r|)^2\right)\leq 2C\left(\underset{0\leq r\leq t}{\sup}||\bar P_r g ||_{2,\infty}\right)\left(1+(\underset{0\leq r\leq t}{\sup}|X^N_r|)^2\right),$$
whose expectation is finite thanks to Lemma \ref{controlmoments}$(iii).$ The same arguments work for $A^N$. This implies that  \eqref{termenumero2} vanishes as $ k \to \infty,  $ and this concludes the proof. }
\end{proof}
We now prove the missing lemma 
\begin{lem}
\label{ANprime}
For all $g\in C_b^2(\r)$  and any $ M > 0, $ 
$$ \sup_{ x  \in [- M, M]} | \left(A^Ng\right)' ( x) | \leq\constantetruc_N {\| g\|_{2,\infty}}\left(1+M^2\right),$$
for some constant $\constantetruc_N>0$ that can depend on~$N,$ but not on $M.$ 
\end{lem}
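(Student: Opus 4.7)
The plan is to simply differentiate the explicit formula for $A^N g$ provided by Lemma~\ref{expressiongenerateur} and then bound each term on $[-M,M]$, relying on the fact that $\mu$ is centered (Assumption~\ref{hyp1control}) and that $\sqrt f$ and its derivatives are controlled (Assumption~\ref{hyp2}). Since $g \in C_b^2(\r)$, $f \in C^4(\r)$, and $\mu$ has a finite second moment, differentiation under the integral is easily justified, yielding
\[
(A^N g)'(x) = -\alpha g'(x) - \alpha x g''(x) + Nf'(x) \int_{\r} [g(x+u/\sqrt{N}) - g(x)]\, d\mu(u) + Nf(x) \int_{\r} [g'(x+u/\sqrt{N}) - g'(x)]\, d\mu(u).
\]

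The first two terms are bounded on $[-M,M]$ by $\alpha\|g\|_{2,\infty}(1+M)$. For the third term, I would use that $\int u\, d\mu(u)=0$ to rewrite the integrand as $g(x+u/\sqrt N) - g(x) - (u/\sqrt N) g'(x)$; a Taylor-Lagrange bound gives the integrand $\le \|g''\|_\infty u^2/(2N)$, which when multiplied by $N$ collapses the factor of $N$ entirely and yields a bound $|f'(x)|\,\|g''\|_\infty\,\sigma^2/2$. Assumption~\ref{hyp2} then gives $|f'(x)|=2|\sqrt{f(x)}||(\sqrt f)'(x)|\le 2m_1(\sqrt{f(0)}+m_1|x|)\le C(1+M)$ on $[-M,M]$.

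The fourth term is the only one that still carries a factor involving $N$. Since we only dispose of $g\in C_b^2$, I would bound $|g'(x+u/\sqrt N) - g'(x)|\le \|g''\|_\infty |u|/\sqrt N$ directly (no access to $g'''$), so this term is $\le \sqrt N\, f(x)\,\|g''\|_\infty \int_\r |u|\, d\mu(u)$. Using $f(x)\le C(1+x^2)$ (from Assumption~\ref{hyp1f}) this contribution is at most $C\sqrt N\,(1+M^2)\,\|g''\|_\infty\int|u|d\mu$, which is finite because $\mu$ has a finite fourth moment, hence a finite first moment. Collecting all four estimates produces $|(A^Ng)'(x)|\le C_N\|g\|_{2,\infty}(1+M^2)$ for $|x|\le M$, where $C_N$ depends on $N$ (through the $\sqrt N$) but not on $M$ or $g$, which is exactly the claim.

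The only mildly subtle point is the step that extracts the extra $1/\sqrt N$ via centering in the third term; without centering that term would blow up like $N$. In the fourth term we cannot perform the analogous cancellation (it would require a bound involving $g'''$), and this is the reason why the constant $C_N$ depends on $N$—this is the one step where one must accept a worse-than-optimal bound, but it is exactly what the statement allows.
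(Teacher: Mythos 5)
Your proposal is correct and takes essentially the same route as the paper: differentiate the explicit formula for $A^Ng$ from Lemma~\ref{expressiongenerateur} and bound the four resulting terms on $[-M,M]$ using the sub-quadraticity of $f$ and the bound $|f'(x)|\leq C(1+|x|)$ coming from Assumption~\ref{hyp2}. The only (immaterial) difference is in the $f'$-term: you exploit the centering of $\mu$ and a second-order Taylor bound to remove the factor of $N$ there, whereas the paper simply bounds both increments by $\|g'\|_\infty\,\mathbb{E}|U|/\sqrt{N}$ and $\|g''\|_\infty\,\mathbb{E}|U|/\sqrt{N}$ and accepts the resulting $\sqrt{N}$, which is harmless since the constant is allowed to depend on $N$.
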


\begin{proof}
We have
\begin{multline*}
\left(A^Ng\right)'(x)={-\alpha g'(x)-\alpha xg''(x)}+Nf'(x)\esp{g\left(x+\frac{U}{\sqrt{N}}\right)-g(x)}
\\+Nf(x)\esp{g'\left(x+\frac{U}{\sqrt{N}}\right)-g'(x)} .  
\end{multline*}
Since 
$$ \esp{\left| g\left(x+\frac{U}{\sqrt{N}}\right)-g(x)\right| } \le \frac{\| g'\|_\infty}{\sqrt{N}} \esp{ |U|} , \; \esp{|g'\left(x+\frac{U}{\sqrt{N}}\right)-g'(x)|} \le  \frac{\| g''\|_\infty}{\sqrt{N}} \esp{ |U|},$$ 
we obtain
$$\sup_{ x  \in [- M, M]} | \left(A^Ng\right)' ( x) | \leq |\alpha|\| g\|_{2,\infty}(1+M)+\sqrt N(|f'(x)|\vee |f(x)|)\| g\|_{2,\infty}\esp{|U|}.$$

Assumption \ref{hyp2} implies that $|f'(x)|\leq m_1 C\sqrt{1+x^2}$ for all $x.$ Together with the sub-quadraticity of $f,$ this concludes the proof.
\end{proof}

\subsection{Existence and uniqueness of the process $\left(X_t^N\right)_t$}\label{existenceXN}

\begin{prop}
\label{wellposed}
If Assumptions~\ref{hyp1f} and~\ref{hyp1control} hold, the equation~\eqref{XtNdefinition} admits a unique non-exploding strong solution.
\end{prop}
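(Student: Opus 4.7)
The plan is to combine a classical PDMP construction with a Lyapunov-type non-explosion argument.

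First, I would truncate the jump rate. For each $K>0$, set $f_K(x) := f(\phi_K(x))$ where $\phi_K:\r\to[-K,K]$ is a Lipschitz retraction, so that $f_K$ is bounded and continuous. The SDE driven by $f_K$ in place of $f$ has bounded jump intensity $Nf_K$ and deterministic flow $\dot x = -\alpha x$ between jumps, so existence and pathwise uniqueness of a strong solution $X^{N,K}$ follow from the standard PDMP construction (cf.\ Chapter~2 of \cite{davis}) using the thinning representation of the Poisson measures $\pi_j$: iteratively sample waiting times against a clock of rate $N\|f_K\|_\infty$ and accept jumps with probability $f_K(X_{t-})/\|f_K\|_\infty$. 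Setting $\tau_K := \inf\{t\geq 0 : |X_t^{N,K}|\geq K\}$, pathwise uniqueness together with $f_K\equiv f_{K'}\equiv f$ on $[-K,K]$ for any $K'\geq K$ ensures that the family $(X^{N,K})_K$ is consistent on $[0,\tau_K]$. I can therefore define $X_t^N := X_t^{N,K}$ on $\{t<\tau_K\}$, giving a strong solution on $[0,\tau_\infty)$ with $\tau_\infty := \lim_{K\to\infty}\tau_K$.

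Second, to prove non-explosion ($\tau_\infty=+\infty$ a.s.) I would apply It\^o's formula with the Lyapunov function $V(x)=x^2$ to the stopped process $X^{N,K}_{\cdot\wedge\tau_K}$. A direct computation of the extended generator, using crucially that $\mu$ is centered with second moment $\sigma^2$, gives
\[
A^N V(x) = -2\alpha x^2 + Nf(x)\int_\r\!\left(\frac{2xu}{\sqrt N}+\frac{u^2}{N}\right)d\mu(u) = -2\alpha x^2 + \sigma^2 f(x).
\]
The sub-quadratic bound $f(x)\leq 2f(0)+2L^2 x^2$ coming from Assumption~\ref{hyp1f} then yields $A^N V(x)\leq C(1+V(x))$ with $C$ independent of $N$ and $K$. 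After taking expectations and noting that the compensated Poisson integral is a true martingale (since the intensity is bounded on $[0,\tau_K]$), Gronwall's inequality produces a bound $\esp{(X^{N,K}_{\tau_K\wedge t})^2}\leq C_t(1+\int x^2\,d\nu_0^N(x))$ uniform in $K$. Markov's inequality then gives $\pro{\tau_K\leq t}\leq C_t K^{-2}\to 0$ as $K\to\infty$, hence $\tau_\infty=+\infty$ almost surely. Strong uniqueness of the global solution follows by the same localization: any two strong solutions $X$, $\tilde X$ must agree with $X^{N,K}$ until their respective exit times of $[-K,K]$, and these exit times tend to infinity.

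The main obstacle will be the Lyapunov step. One must carefully handle the Poisson integral contribution to $V(X_t^N)$ (writing it as a compensated martingale plus its compensator), and crucially exploit both the centering of $\mu$ (which kills the otherwise dangerous linear-in-$u$ term of order $\sqrt N\,x f(x)$) and the finiteness of its second moment. Without the centering, the dominant term would be of order $\sqrt N\,x f(x)$ and the sub-quadratic growth of $f$ would be too weak to close the estimate; with it, the effective drift is merely $\sigma^2 f(x)$, which combines with $-2\alpha x^2$ into a linear bound in $V$ and makes the Gronwall argument work.
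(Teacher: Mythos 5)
Your proposal is correct and follows essentially the same route as the paper: truncate $f$ to a bounded rate (where existence and pathwise uniqueness are standard — the paper simply cites Theorem~IV.9.1 of \cite{ikeda} where you spell out the PDMP/thinning construction), then use the Lyapunov function $x\mapsto x^2$ with It\^o's formula and Gr\"onwall's lemma, exploiting the centering and finite second moment of $\mu$, to get a $K$-uniform second moment bound for the stopped processes, deduce non-explosion, and obtain uniqueness by the same localization argument. The only differences are cosmetic (form of the truncation, explicit Markov-inequality step), so nothing further is needed.
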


\begin{proof}
It is well known that if $f$ is bounded, there is a unique strong solution of~\eqref{XtNdefinition} (see Theorem~IV.9.1 of~\cite{ikeda}). In the general case we reason in a similar way as in the proof of Proposition~2 in~\cite{nicolaseva}. Consider the solution $(X^{N,K}_t)_{t\in\r_+}$ of the equation~\eqref{XtNdefinition} where $f$ is replaced by $f_K:x\in\r\mapsto f(x)\wedge\underset{|y|\leq K}{\sup}f(y)$ for some $K\in\n^*$. Introduce moreover the stopping time $$\tau_K^N=\inf\left\{t\geq 0~:~\left|X_t^{N,K}\right|\geq K\right\}.$$Since for all $t\in\left[0,\tau_K^N\wedge\tau_{K+1}^N\right]$, $X_t^{N,K}=X_t^{N,K+1}$, we know that $\tau_K^N(\omega)\leq\tau_{K+1}^N(\omega)$ for all $\omega$. Then we can define $\tau^N$ as the non-decreasing limit of $\tau_K^N$. With a classical reasoning relying on It\^o's formula and Gr\"onwall's lemma, we can prove that
\begin{equation}
\label{controlXNKt}
{\underset{0\leq s\leq t}{\sup}\esp{\left(X^{N,K}_{s\wedge\tau_K^N}\right)^2}}\leq C_t\left(1+x^2\right),
\end{equation}
where $C_t>0$ does not depend on $K$. As a consequence, we know that almost surely, $\tau^N = + \infty.$ So we can simply define $X_t^N$ as the limit of $X_t^{N,K}$, as $K$ goes to infinity. Now we show that $X^N$ satisfies equation~\eqref{XtNdefinition}. Consider some $\omega\in\Omega$ and $t>0$, and choose $K$ such that $\tau_K^N(\omega)>t$. Then we know that for all $s\in[0,t]$, $X_s^N(\omega)=X_s^{N,K}(\omega)$ and $f(X_{s-}^N(\omega))=f_K(X_{s-}^{N,K}(\omega)).$ Moreover, as $X^{N,K}(\omega)$ satisfies equation~\eqref{XtNdefinition} with $f$ replaced by $f_K$, we know that $X^N(\omega)$ verifies equation~\eqref{XtNdefinition} on $[0,t]$. This holds for all $t>0.$ As a consequence, we know that $X^N$ satisfies equation~\eqref{XtNdefinition}. This proves the existence of a strong solution. The uniqueness is a consequence of the uniqueness of strong solutions of~\eqref{XtNdefinition}, if we replace $f$ by $f_K$ in~\eqref{XtNdefinition}, and of the fact that any strong solution $(Y_t^N)_t$ equals necessarily $(X^{N,K}_t)_t$ on $[0,\tau^N_K]$.
\end{proof}

\subsection{Proof of Lemma~\ref{controlmoments}}
\begin{proof}

We begin with the proof of $(i)$. Let $\Phi(x)=x^2$ and $A^{N}$ be the extended generator of $(X^{N}_{t})_{t\geq 0}$. One can note that, applying Fatou's lemma to the inequality~\eqref{controlXNKt}, one obtains for all $t\geq 0,{\underset{0\leq s\leq t}{\sup}\esp{(X^N_s)^2}}$ is finite. As a consequence $\Phi\in\mathcal{D}'(A^{N})$ (in the sense of Definition~\ref{definitiongenerateur}). And, {recalling that $\mu$ is centered and that $\sigma^2:=\int_{\mathbb{R}} u^2d\mu(u)$, we have} for all $x\in\r,$
\begin{align*}
A^{N}\Phi(x)=&{-\alpha x\Phi'(x) + Nf(x)\int_{\mathbb{R}}\left[\Phi(x+\frac{u}{\sqrt{N}})-\Phi(x)\right]d\mu(u)}\\
=&{-2\alpha x^2 + Nf(x)\int_{\mathbb{R}}\left[2x\frac{u}{\sqrt{N}} + \frac{u^2}{N}\right]d\mu(u)}\\
=&-2\alpha \Phi(x)+\sigma^2f(x)\leq -2\alpha\Phi(x) + \sigma^2\left(L|x|+\sqrt{f(0)}\right)^2\\
\leq&(\sigma^2L^2-2\alpha)\Phi(x)+2\sigma^2L|x|\sqrt{f(0)}+\sigma^2f(0).
\end{align*}
Let $\eps>0$ be fixed, and $\eta_\eps=2\sigma^2L\sqrt{f(0)}/\eps$. Using that, for every $x\in\r, |x|\leq x^2/\eta_\eps+\eta_\eps,$ we have
\begin{equation}
\label{phicd}
A^{N}\Phi(x)\leq c_\eps\Phi(x) + d_\eps,
\end{equation}
with $c_\eps = \sigma^2L^2-2\alpha +\eps$ and $d_\eps = O(1/\eps).$ Let us assume that $c_\eps\neq 0$, possibly by reducing $\eps>0$. Considering $Y_t^{N}:=e^{-c_\eps t}\Phi(X^{N}_{t}),$ by It\^o's formula,
\begin{align*}
dY^{N}_t=&-c_\eps e^{-c_\eps t}\Phi(X^{N}_{t})dt + e^{-c_\eps t}d\Phi(X^{N}_{t})\\
=& -c_\eps e^{-c_\eps t}\Phi(X^{N}_{t})dt + e^{-c_\eps t}A^{N} \Phi(X^{N}_{t})dt + e^{-c_\eps t}dM_t,
\end{align*}
{where, denoting by $\tilde \pi_j(dt,dx,du) := \pi_j(dt,dx,du)-dtdxd\mu(u)$ the compensated measure of $\pi_j$ ($1\leq j\leq N$), $(M_t)_{t\geq 0}$ is the $\mathbb{P}_x-$local martingale defined as
$$M_t = \sum_{j=1}^N\int_{[0,t]\times\mathbb{R}_+\times\mathbb{R}}\left[\Phi(X^N_{s-}+\frac{u}{\sqrt{N}})-\Phi(x)\right]\uno{z\leq f(X^N_{s-})}d\tilde\pi_j(s,z,u).$$

One can note that, since $\underset{0\leq s\leq t}{\sup}\esp{(X^N_s)^2}$ is finite for any $t\geq 0,$ $(M_t)_{t\geq 0}$ is a locally square integrable $\mathbb{P}_x-$local martingale, and as a consequence, it is a $\mathbb{P}_x-$martingale.}

Using~\eqref{phicd}, we obtain
$$dY^{N}_t\leq d_\eps e^{-c_\eps t} dt +e^{-c_\eps t}dM_t,$$
implying
$$\espcc{x}{Y^{N}_t}\leq \espcc{x}{Y^{N}_0}+\frac{d_\eps}{c_\eps} e\ll(^{-c_\eps t}+1\rr).$$
One deduces
\begin{equation}
\label{mNtinftaukbound}
\espcc{x}{\left(X^{N}_{t}\right)^2}\leq x^2e^{(\sigma^2L^2-2\alpha+\eps)t}+\frac{\constantetruc}{\eps}\ll(e^{(\sigma^2L^2-2\alpha+\eps)t}+1\rr),
\end{equation}
for some constant $\constantetruc>0$ independent of $t,\eps,N.$

\noindent
The proof of $(ii)$ is analogous and therefore omitted.

\noindent
Now we prove $(iii).$
From
$$
X_t^N= X_0^N-\alpha\int_0^t X_s^Nds+\frac{1}{\sqrt{N}}\sum_{j=1}^N\int_{]0,t]\times\r_+\times\r}u\uno{z\leq f(X^N_{s-})}d\pi_j(s,z,u),
$$
we deduce 
\begin{multline}\label{bdg}
\left(\underset{0\leq s\leq t}{\sup}\left|X_t^N\right|\right)^2\leq 3\left(X_0^N\right)^2+3\alpha^2t\int_0^t(X_s^N)^2ds \\
+3\sum_{j=1}^N\left(\underset{0\leq s\leq t}{\sup}\left|\int_{]0,s]\times\r_+\times\r}u\uno{z\leq f(X^N_{r-})}d\pi_j(r,z,u)\right|\right)^2.
\end{multline}
Applying Burkholder-Davis-Gundy inequality to the last term above in~\eqref{bdg}, we can bound its expectation by
\begin{align}
3N\esp{\displaystyle\int_{]0,t]\times\r_+\times\r}u^2\uno{z\leq f(X^N_{s-})}d\pi_j(s,z,u)}&\leq 3N\sigma^2\int_0^t\esp{f(X^N_{s-})}ds\nonumber\\
&\leq  3N\sigma^2\constantetruc\int_0^t\left(1+\esp{(X^N_s)^2}\right)ds.\label{bdgterm}
\end{align}

Now, bounding {the expectation of}~\eqref{bdg} by~\eqref{bdgterm}, and using point $(i)$ of the lemma we conclude the proof of $(iii)$.

{The assertion  $(iv)$ can be proved in classical way, applying It\^o's formula and Gr\"onwall's lemma. Let us explain how to prove this property for the process $X^N.$ The proof for $\bar X$ is similar.
According to It\^o's formula, for every $t\geq 0,$
\begin{multline*}
(X^N_t)^4
= (X^N_0)^4-4\alpha\int_0^t (X^N_s)^4ds \\
+ \sum_{j=1}^N\int_{[0,t]\times\mathbb{R}_+\times\mathbb{R}}\left[(X^N_{s-}+\frac{u}{\sqrt{N}})^4-(X^N_{s-})^4\right]\uno{z\leq f(X^N_{s-})}d\pi_j(s,z,u)\\
\leq (X^N_0)^4+\sum_{j=1}^N\int_{[0,t]\times\mathbb{R}_+\times\mathbb{R}}\left[(X^N_{s-}+\frac{u}{\sqrt{N}})^4-(X^N_{s-})^4\right]\uno{z\leq f(X^N_{s-})}d\pi_j(s,z,u).
\end{multline*}

Let us recall that $u$ is centered and has a finite fourth moment, and that $f$ is subquadratic. Introducing the stopping times $\tau^N_K:=\inf\{t>0:|X^N_t|>K\}$ for $K>0,$ and $u^N_K(t) := \esp{(X^N_{t\wedge\tau^N_K})^4},$ it follows from the above that for all $t\geq 0,$
$$u^N_K(t) \leq C + Ct + C\int_0^t u^N_K(s)ds,$$
where $C$ is a constant independent of $t,N$ and $K$. This implies that for all $t\geq 0,$
$$\underset{N\in\mathbb{N}^*}{\sup}~\underset{K>0}{\sup}~\underset{0\leq s\leq t}{\sup} u^N_K(s)<\infty.$$

Consequently, the stopping times $\tau^N_K$ tend to infinity as $K$ goes to infinity, and Fatou's lemma allows to conclude.}

{{We finally prove~$(v)$. Indeed, by  It\^o's isometry and Jensen's inequality, for all $0\leq s\leq t \le T,$ using the sub-quadraticity of $f$ and $(i),$
\begin{align*}
\espccc{x}{N}{(X^N_t-X^N_s)^2} =& \mathbb{E}_x \left[ \left(-\alpha\int_s^t X^N_rdr + \frac{1}{\sqrt{N}}\sum_{j=1}^N\int_{]s,t]\times\mathbb{R}_+\times\mathbb{R}}u\uno{z\leq f(X^N_{r-})}d\pi_j(r,z,u)\right)^2\right] \\
\leq& 2\alpha^2(t-s)\int_s^t\espccc{x}{N}{(X^N_r)^2}dr + 2\sigma^2\int_s^t\espccc{x}{N}{f(X^N_r)}dr\\
\leq& 2\alpha^2 C_{t}(1+x^2)(t-s)^2 + 2\sigma^2C_{t}(1+x^2)(t-s)\\
\leq & C_T(t-s) (1 + x^2 ).
\end{align*}
This proves that $X^N$ satisfies hypothesis~$(v)$. A similar computation holds true for $\bar X.$}}
\end{proof}
\subsection{Proof of Proposition~\ref{regularityPt}}
\begin{proof}

To begin with, we use Theorem~1.4.1 of~\cite{kunita2} to prove that the flow associated to the SDE~\eqref{sde} admits a modification which is $C^3$ with respect to the initial condition $x$ (see also Theorem~4.6.5 of~\cite{kunita}). Indeed the local characteristics of the flow are given by $$b(x,t)=-\alpha x~\textrm{ and }~a(x,y,t)=\sigma^2\sqrt{f(x)f(y)},$$ and, under Assumptions~\ref{hyp1f} and~\ref{hyp2}, they satisfy the conditions of Theorem~1.4.1 of~\cite{kunita2}:
\begin{itemize}
\item $\exists \constantetruc,\forall x,y,t,|b(x,t)|\leq \constantetruc(1+|x|)$ and $|a(x,y,t)|\leq \constantetruc(1+|x|)(1+|y|)$.
\item $\exists \constantetruc,\forall x,y,t,|b(x,t)-b(y,t)|\leq \constantetruc|x-y|$ and $|a(x,x,t)+a(y,y,t)-2a(x,y,t)|\leq \constantetruc|x-y|^2$.
\item $\forall 1\leq k\leq 4,1\leq l\leq 4-k,\frac{\partial^k}{\partial x^k}b(x,t)$ and $\frac{\partial^{k+l}}{\partial x^k\partial y^l}a(x,y,t)$ are bounded.
\end{itemize}

In the following, we consider the process $(\bar{X}_t^{(x)})_t, $ solution of the SDE~\eqref{sde} and satisfying $\bar{X}_0^{(x)}=x$. Then we can consider a modification of the flow~$\bar{X}^{(x)}_t$ which is $C^3$ with the respect to the initial condition $x=\bar{X}_0^{(x)}$. It is then sufficient to control the moment of the derivatives of $\bar{X}_t^{(x)}$ with respect to~$x$, since with those controls we will have
\begin{align}
\bar{P}_tg(x)=&\esp{g\left(\bar{X}_t^{(x)}\right)}, \; \; 
\left(\bar{P}_tg\right)'(x)=\esp{\deriv{1}{t}g'\left(\bar{X}_t^{(x)}\right)},\nonumber\\
\left(\bar{P}_tg\right)''(x)=&\esp{\deriv{2}{t}g'\left(\bar{X}_t^{(x)}\right)+\left(\deriv{1}{t}\right)^2g''\left(\bar{X}_t^{(x)}\right)},\nonumber\\
\left(\bar{P}_tg\right)'''(x)=&\esp{\deriv{3}{t}g'\left(\bar{X}_t^{(x)}\right)+3\deriv{2}{t}\cdot\deriv{1}{t}g''\left(\bar{X}_t^{(x)}\right)+\left(\deriv{1}{t}\right)^3g'''\left(\bar{X}_t^{(x)}\right)}\label{Ptgd3}.
\end{align}
We start with the representation
$$\bar{X}_t^{(x)}=xe^{-\alpha t}+\sigma\int_0^te^{-\alpha (t-s)}\sqrt{f\left(\bar{X}_s^{(x)}\right)}dB_s.$$
This implies
\begin{equation}
\label{derive1}
\deriv{1}{t}=e^{-\alpha t}+\sigma\displaystyle\int_0^te^{-\alpha (t-s)}\deriv{1}{s}\derivf{1}\left(\bar{X}_s^{(x)}\right)dB_s.
\end{equation}
Writing $U_t=e^{\alpha t}\deriv{1}{t}$ and 
\begin{equation}\label{eq:mt}
 M_t = \int_0^t \sigma \derivf{1}\left(\bar{X}_s^{(x)}\right)dB_s , 
\end{equation}
we obtain $U_t=1+\int_0^t U_s dM_s,$
whence
\begin{equation}\label{Utcont}
U_t = \exp \left( M_t - \frac12 <M>_t\right). 
\end{equation}
Notice that this implies $ U_t > 0 $ almost surely, whence $ \deriv{1}{t} > 0 $ almost surely. Hence
$$ U_t^p = e^{ p M_t - \frac{p}{2} <M>_t} = \exp \left( p M_t - \frac12 p^2 <M>_t \right) e^{ \frac12 p (p-1) <M>_t} = {\mathcal E}(M)_te^{ \frac12 p (p-1) <M>_t} .$$
Since $ \derivf{1} $ is bounded, $M_t $ is a martingale, thus ${\mathcal E}(M)$ is an exponential martingale with expectation $1,$ implying that 
\begin{equation}\label{eq:utp}
 \E U_t^p \le e^{ \frac12 p (p-1) \sigma^2 m^2_1 t },
\end{equation}
where $m_1$ is the bound of $(\sqrt{f})'$ introduced in Assumption~\ref{hyp2}.
In particular we have
\begin{equation}
\label{deriv123}
\esp{\left(\deriv{1}{t}\right)^2}\leq e^{(\sigma^2m_1^2-2\alpha)t}\textrm{ and }\esp{\left|\deriv{1}{t}\right|^3}\leq e^{(3\sigma^2m_1^2-3\alpha)t}.
\end{equation}
%\begin{equation}
%\label{deriv12}
%\esp{\left(\deriv{1}{t}\right)^2}\leq e^{(\sigma^2m_1^2-2\alpha)t},
%\end{equation}
%and
%\begin{equation}
%\label{deriv13}
%\esp{\left|\deriv{1}{t}\right|^3}\leq e^{(3\sigma^2m_1^2-3\alpha)t}.
%\end{equation}
Differentiating~\eqref{derive1} with respect to $x$, we obtain
\begin{equation}
\label{derive2}
\deriv{2}{t}=\sigma\int_0^te^{-\alpha(t-s)}\left[\deriv{2}{s}\derivf{1}\left(\bar{X}_s^{(x)}\right)+\left(\deriv{1}{s}\right)^2\derivf{2}\left(\bar{X}_s^{(x)}\right)\right]dB_s.
\end{equation}
We introduce $V_t=\deriv{2}{t}e^{\alpha t}$ and deduce from this that
\begin{align*}
V_t=&\sigma\int_0^t\left[V_s\derivf{1}\left(\bar{X}_s^{(x)}\right)+e^{-\alpha s}U_s^2\derivf{2}\left(\bar{X}_s^{(x)}\right)\right]dB_s,
\end{align*}
which can be rewritten as 
$$ d V_t = V_t d M_t + Y_t d B_t, V_0 = 0 , Y_t = \sigma e^{-\alpha t}U_t^2\derivf{2}\left(\bar{X}_t^{(x)}\right),$$
with $M_t$ as in \eqref{eq:mt}.
Applying It\^o's formula to $ Z_t:= V_t/U_t $ (recall that $ U_t > 0 $), we obtain
$$ d Z_t = \frac{Y_t}{U_t} d B_t - \frac{Y_t}{U_t} d < M , B>_t , $$ 
such that, by the precise form of $ Y_t$ and since $ Z_0 = 0, $  
$$ Z_t = \sigma\int_0^t e^{- \alpha s } U_s\derivf{2}\left(\bar{X}_s^{(x)}\right)d B_s - \sigma^2 \int_0^t e^{- \alpha s } U_s\derivf{2}\left(\bar{X}_s^{(x)} \right) \derivf{1}\left(\bar{X}_s^{(x)} \right)ds .$$ 
Using Jensen's inequality, \eqref{eq:utp} and Burkholder-Davis-Gundy inequality, for all $t \geq 0, $ 
\begin{multline}\label{zt4}
\esp{Z_t^4}\leq \constantetruc \left(\esp{\left(\displaystyle\int_0^te^{-\alpha s}U_s\derivf{2}\left(\bar{X}_s^{(x)}\right)dB_s\right)^4}\right. \\
\left.+\esp{\left(\int_0^te^{-\alpha s}U_s\derivf{1}\left(\bar{X}_s^{(x)} \right)\derivf{2}\left(\bar{X}_s^{(x)}\right)ds\right)^4}\right) \\
\leq \constantetruc \left(\esp{\left(\displaystyle\int_0^te^{-2\alpha s}U_s^2\derivf{2}\left(\bar{X}_s^{(x)}\right)^2ds\right)^2}\right. \\
\left.+\esp{\left(\int_0^te^{-\alpha s}U_s\derivf{1}\left(\bar{X}_s^{(x)} \right)\derivf{2}\left(\bar{X}_s^{(x)}\right)ds\right)^4}\right) \\
\leq \constantetruc\left(t+t^3\right)\int_0^t e^{-4\alpha s}\esp{U_s^4}ds\leq \constantetruc\left(t+t^3\right)\int_0^te^{(6\sigma^2m_1^2-4\alpha)s}ds\\
\leq  \constantetruc\left(t+t^3\right)\left(1+t+e^{(6\sigma^2m_1^2-4\alpha)t}\right)\leq C(t+t^4)e^{(6\sigma^2m_1^2-4\alpha)t}.
\end{multline}
We deduce that
\begin{multline*}
\esp{V_t^2}\leq \esp{Z_t^4}^{1/2}\esp{U_t^4}^{1/2}\leq C(t^{1/2}+t^2)e^{3\sigma^2m_1^2-2\alpha t}e^{3\sigma^2m_1^2t}\leq C(t^{1/2}+t^2)e^{6\sigma^2m_1^2-2\alpha t},
\end{multline*}
whence
\begin{equation}
\label{deriv22}
\esp{\left(\deriv{2}{t}\right)^2}\leq \constantetruc(t^{1/2}+t^2)e^{(6\sigma^2m_1^2-4\alpha)t}.
\end{equation}
Finally, differentiating~\eqref{derive2}, we get
\begin{align}
\deriv{3}{t}=\sigma\int_0^te^{-\alpha(t-s)}&\left[\deriv{3}{s}\derivf{1}\left(\bar{X}_s^{(x)}\right)+3\deriv{2}{s}\deriv{1}{s}\derivf{2}\left(\bar{X}_s^{(x)}\right)\right.\label{x3cont}\\
&~~\left.+\left(\deriv{1}{s}\right)^3\derivf{3}\left(\bar{X}_s^{(x)}\right)\right]dB_s.\nonumber
\end{align}
Introducing $W_t=e^{\alpha t}\deriv{3}{t}$, we obtain
$$W_t=\sigma\int_0^t\left[W_s\derivf{1}\left(\bar{X}_s^{(x)}\right)+3e^{-\alpha s}U_sV_s\derivf{2}\left(\bar{X}_s^{(x)}\right)+e^{-2\alpha s}U_s^3\derivf{3}\left(\bar{X}_s^{(x)}\right)\right]dB_s.$$
Once again we can rewrite this as 
$$ d W_t = W_t d M_t + Y_t' d B_t, W_0 = 0 ,$$
where 
$$  Y_t' = \sigma \left( 3e^{-\alpha t}U_tV_t\derivf{2}\left(\bar{X}_t^{(x)}\right) + e^{-2\alpha t}U_t^3\derivf{3}\left(\bar{X}_t^{(x)}\right) \right) ,  $$
whence, introducing $Z'_t=\frac{W_t}{U_t},$
$$    Z'_t = \int_0^t \frac{Y'_s}{U_s} d B_s - \int_0^t \frac{Y'_s}{U_s} d < M , B>_s.$$
As previously, we obtain,
\begin{align}
\esp{\left(Z'_t\right)^2}\leq& \constantetruc(1+t)\int_0^t\esp{\left(\frac{Y'_s}{U_s}\right)^2}ds\nonumber\\
\leq & \constantetruc(1+t)\int_0^t\left(e^{-2\alpha s}\esp{V_s^2}+e^{-4\alpha s}\esp{U_s^4}\right)ds\nonumber\\
\leq& \constantetruc(1+t)\int_0^t\left((s^{1/2}+s^2)e^{(6\sigma^2m_1^2-4\alpha)s}+e^{(6\sigma^2m_1^2-4\alpha)s}\right)ds\nonumber\\
\leq&\constantetruc(1+t^3)\int_0^t e^{(6\sigma^2m_1^2-4\alpha)s}dss\nonumber\\
\leq& \constantetruc(1+t^3)(1+t+e^{(6\sigma^2m_1^2-4\alpha)t}) \; \; 
\leq \constantetruc(1+t^4)\left(1+e^{(6\sigma^2m_1^2-4\alpha)t}\right).
\end{align}

As a consequence,
\begin{align*}
\esp{|W_t|}\leq& \esp{(Z'_t)^2}^{1/2}\esp{U_t^2}^{1/2}\leq \constantetruc(1+t^{2})\left(1+e^{(3\sigma^2m_1^2-2\alpha) t}\right)e^{\frac12\sigma^2m_1^2t}\\
\leq& \constantetruc(1+t^{2})\left(e^{\frac12\sigma^2m_1^2t}+e^{(\frac72\sigma^2m_1^2-2\alpha) t}\right),
\end{align*}

implying
\begin{equation}
\label{deriv31}
\esp{\left|\frac{\partial^3\bar{X}_t^{(x)}}{\partial^3x}\right|}\leq \constantetruc(1+t^{2})\left(e^{(\frac12\sigma^2m_1^2-\alpha)t}+e^{(\frac72\sigma^2m_1^2-3\alpha) t}\right).
\end{equation}

Finally, using Cauchy-Schwarz inequality, and inserting~\eqref{deriv123},~\eqref{deriv22} and~\eqref{deriv31} in~\eqref{Ptgd3},
$$\left|\left|\left(\bar{P}_tg\right)'''\right|\right|_\infty\leq \constantetruc||g||_{3,\infty}(1+t^2)\left(e^{(\frac12\sigma^2m_1^2-\alpha)t}+e^{2(\sigma^2m_1^2-\alpha)t}+e^{(\frac72\sigma^2m_1^2-3\alpha)t}\right),$$
which proves
{the first assertion of the  proposition. The proof of the second assertion, equation \eqref{eq:qt2}, follows similarly. Finally to prove the third assertion,
we first study the regularity of the first derivative. Notice that $t\mapsto \frac{\partial \bar X_t^{(x)}}{\partial x}$ is almost surely continuous by equation~\eqref{derive1}. Now take any sequence $ t_n \to t .$ By \eqref{deriv123}, the family of random variables $ \left\{  \frac{\partial \bar X_{t_n}^{(x)}}{\partial x} g' ( \bar X_{t_n}^{(x)}), n \geq 1  \right\}$ is uniformly integrable. As a consequence, the second formula in \eqref{Ptgd3} implies that $ (\bar P_{t_n}g)'(x) \to (\bar P_{t}g)'(x) $ as $n \to \infty ,$ whence the desired continuity. The argument is similar for the second derivative, using~\eqref{derive2} and~\eqref{deriv22}.}
That concludes the proof.
\end{proof}

\nocite{clinet_statistical_2017}
\bibliography{convergenceZN_barZ}

\end{document}